\newtheorem{thm}{Theorem}[section]
\newtheorem{cor}[thm]{Corollary}
\newtheorem{conj}{Conjecture}
\newtheorem{lma}[thm]{Lemma}
\newtheorem{prop}[thm]{Proposition}
\newcommand{\set}[1]{\{ #1 \}}
\newcommand{\Z}{\mathbb{Z}}
\newcommand{\Mcal}{{\mathcal M}}
\title{Recursive Relationships in the Classes of Odd Graphs and Middle Levels Graphs}
\date{\today}
\author{Timothy J. Frye\\Department of Mathematics, Tabor College}
\begin{document}

%\tableofcontents
%\listoffigures
%\clearpage

\maketitle

\begin{abstract} The classes of odd graphs $O_n$ and middle levels graphs $B_n$ form one parameter subclasses of the Kneser graphs and bipartite Kneser graphs respectively. In particular both classes are vertex transitive while resisting definitive conclusions about their Hamiltonicity, and have thus come under scrutiny with regards to the Lov\'asz conjecture. In this paper, we will establish that middle levels graphs may always be embedded in odd graphs and middle levels graphs of higher degree, and furthermore, that this embedding allows us to define a recursion relationship in both classes which can be used to lift paths in $O_{n-1}$ (respectively $B_{n-1}$) to paths in $O_n$ (respectively $B_n$). This embedding also gives rise to the natural formation of a class of biregular graphs which give connections between the odd graphs, middle levels graphs and Catalan numbers.\end{abstract}

\section{Introduction}
Kneser graphs and bipartite Kneser graphs form a rich class of vertex transitive graphs which cannot be realized as Cayley graphs. Of high interest and study in this class are the subclasses of odd graphs and middle levels graphs, specifically as they are related to the Lov\'asz conjecture (conjecture \ref{lov}). Several authors have noted the strong relationship between these two classes, and this paper augments this relationship by proving in theorem \ref{thm:evenmidl} that the odd graph $O_n$ always contains subgraphs isomorphic to the middle levels graph $B_k$ for all $k<n$. These subgraphs may be obtained by the deletion of a collection of edge colors. Moreover, we show in theorem \ref{thm:oddsuperstructure} and its corollary that within the collection of all middle levels graphs obtained by the deletion of a collection of colors (from either $O_n$ or $B_n$), certain pairs of middle levels graphs have a natural adjacency relationship. The graph obtained by taking each of these middle levels subgraphs as a vertex and adding an edge for every adjacent pair will always be isomorphic to an odd graph or a middle levels graph, where the type of this isomorphism corresponds to the type of graph in which the middle levels subgraphs are being examined. 

A natural consequence of this discussion is that the graph obtained by the removal of two colors of edges of $O_n$ contains a single component isomorphic to the middle levels graph $B_{n-1}$. As $B_{n-1}$ is also by theorem \ref{dcovthm} a bipartite double cover of $O_{n-1}$, we can establish a recursion relationships for both the category of odd graphs and the category of middle levels graphs. With respect to the Lov\'asz conjecture, this recursion allows us to lift paths from $O_{n-1}$ to $O_n$ as will be described in the fifth section.

The third section of this paper highlights the existence of a naturally occurring class of two-parameter biregular subgraphs of the odd and middle levels graphs which by proposition \ref{prop:isomout} may be uniquely determined by their biregularity. We call these graphs \textit{remainder graphs}, and in addition to the discussion of the recursion relation for the odd and middle levels graphs, the fifth section also contains several previously unpublished results linking remainder graphs, odd graphs and middle levels graphs with the Catalan numbers.

\section{Background}
We will use primarily the combinatorial definition of an undirected simplicial graph $G$ as a set of \textit{vertices} $V=V(G)$ and \textit{edges} $E=E(G)\subseteq V(G)\times V(G)$ which satisfies that $(v,v)\notin E$ for all $v$ and $(u,v)\in E$ if and only if $(v,u)\in E$, so that in the sequel we will refer to both $(u,v)$ and $(v,u)$ as the same edge. When $(u,v)\in E$ we say that $u$ and $v$ are \textit{adjacent}, and we write $u\sim v$. When $u$ forms one of the vertices of an edge, we say that $u$ is \textit{incident} with that edge. A graph $G$ is \textit{bipartite} if there is a partition of its vertex set $V=U\cup W$ with $U\cap W=\varnothing$, and $E\subseteq (U\times W)\cup(W\times U)$. Thus every edge is incident with an element from $U$ and an element of $W$ and no edge is incident with two vertices which are both from the same set. 

The \textit{degree} of a vertex $v$ is the number of edges incident to that vertex and is given by $d(v)=|\set{(v,u)\in E| u\in V}|$, and the minimum and maximum degrees of graph $G$ are given by (respectively) $$d(G):=\min\set{d(v)|v\in V(G)}\ \ \textrm{and}\ \ D(G):=\max\set{d(v)|v\in V(G)}.$$ When $d(G)=D(G)=k$, we say that $G$ is \textit{$k-$regular}. A graph is \textit{$(n,k)-$biregular} when it is bipartite and the partition of its vertices $V=U\cup W$ satisfies that $d(v)=n$ whenever $v\in U$ and $d(v)=k$ whenever $v\in W.$

A \textit{path of length $n$} from vertex $v$ to vertex $w$ is an ordered collection of vertices $\langle v,v_1,v_2,\dots,v_{n-1},w\rangle$ satisfying $ v\sim v_1,$ $v_{n-1}\sim w$ and $v_i\sim v_{i+1}$ for all $i$. It is a \textit{simple path} if all of the vertices are distinct, and a \textit{closed path} or \textit{circuit} if $v=w$. If a simple path contains all of the vertices of the graph, then it is a \textit{Hamiltonian path}. If additionally a Hamiltonian path is a circuit, it is called a \textit{Hamiltonian circuit} and if a graph $G$ admits a Hamiltonian circuit, then $G$ is said to be a \textit{Hamiltonian graph}. Furthermore, we can define the \textit{distance} $d(v,w)$ between two vertices $v,w$ to be the length of the shortest path between the two vertices. 

A graph is \textit{connected} if for any two vertices $v$ and $w$ of the graph, there is a path from $v$ to $w$. A \textit{component} of a disconnected graph is a maximal connected subgraph of a graph. We must further define morphisms before we can state Lov\'asz's conjecture.

For two graphs $G,H$, a \textit{graph morphism} $\phi:G\to H$ is a function $\phi:V(G)\to V(H)$ which satisfies that if $u\sim v$ in $G$ then $\phi(u)\sim\phi(v)$, that is: $\phi$ preserves adjacency of vertices. A graph morphism is an \textit{isomorphism} if it is bijective. A graph isomorphism $\phi$ is an \textit{automorphism} if its domain and codomain are the same, that is $G=H$. Furthermore, we say that a given graph $G$ is \textit{vertex transitive} if for any two vertices, $v,w\in V(G)$, there is a graph automorphism $\phi$ for which $\phi(v)=w.$

\begin{conj}[Lov\'asz, 1970]\label{lov} All but a finite number of vertex transitive graphs are Hamiltonian.
\end{conj}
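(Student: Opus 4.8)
The plan is complicated at the outset by the fact that this statement is one of the central open problems of algebraic graph theory, and no complete proof is currently known; what follows is therefore a description of the program one would attempt rather than a route I can promise terminates. The natural first move is to pass from the combinatorial data of a connected vertex-transitive graph $G$ to its automorphism group $A=\mathrm{Aut}(G)$, which by hypothesis acts transitively on $V(G)$. Fixing a vertex $v$ with stabilizer $A_v$, one identifies $V(G)$ with the coset space $A/A_v$, so that every vertex-transitive graph is ``almost'' a Cayley graph, differing only in whether the action is regular, i.e.\ whether $A_v$ is trivial. This reduction isolates the first subproblem: the restriction of the conjecture to Cayley graphs, where $V(G)$ is literally a group and adjacency is translation by a fixed connecting set $S$.

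First I would attack the Cayley case, since there the group structure supplies genuine leverage. For abelian groups a Hamiltonian cycle can be assembled directly from a generating set, and a substantial body of partial results (groups with cyclic commutator subgroup, and groups of order $p$, $2p$, $pq$, and so on) already covers wide swaths. The aim at this stage would be to reduce an arbitrary finite group to a manageable list of ``small'' or sporadic obstructions, mirroring the five known connected vertex-transitive graphs with no Hamiltonian cycle, namely $K_2$, the Petersen graph, the Coxeter graph, and the two graphs obtained from the latter two by replacing each vertex with a triangle, none of which is a Cayley graph. The ``all but a finite number'' phrasing is what makes the target attainable in principle: one need not eliminate these exceptions, only prove that they are the only ones.

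The bridge from the Cayley case back to the general vertex-transitive case is where the covering and recursion machinery developed in this paper becomes relevant. Theorem \ref{dcovthm} exhibits $B_{n-1}$ as a bipartite double cover of $O_{n-1}$, and the path-lifting recursion of the later sections shows concretely how Hamiltonian structure can be transported across such a cover. Abstractly, one would hope to realize an arbitrary vertex-transitive $G$ as a quotient or cover of a Cayley graph of a related group, and then lift a Hamiltonian cycle along the covering map while controlling the finitely many places where the lift can fail. The Kneser and bipartite Kneser families treated here are precisely a proving ground for this lifting technology, which is why the recursion established in this paper is of more than parochial interest.

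The hard part, and indeed the reason the conjecture remains open, is that no such uniform bridge exists. Even the Cayley subcase resists a complete solution, and the passage from a regular action to a merely transitive one dismantles the very group structure on which every known argument depends. A genuine proof would have to establish a structural dichotomy: that every sufficiently large vertex-transitive graph either carries enough symmetry to admit a direct Hamiltonian construction or fibers over a smaller vertex-transitive graph through a Hamiltonicity-preserving map, with the recursion below supplying the inductive engine for the second alternative. Making such a dichotomy precise and exhaustive, rather than merely plausible for the highly symmetric families considered here, is exactly the obstacle that all existing approaches leave unresolved.
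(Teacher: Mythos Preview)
The statement you were asked to prove is Conjecture~\ref{lov}, and the paper does not prove it: it is stated precisely as an open conjecture and serves only as motivation for the study of odd graphs and middle levels graphs. There is therefore no ``paper's own proof'' against which to compare your attempt.

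You recognized this correctly at the outset and, rather than feign a proof, you outlined a program: reduce to Cayley graphs via the coset-space identification $V(G)\cong A/A_v$, handle the Cayley case using group-theoretic leverage, and then try to bridge back to general vertex-transitive graphs through covering and lifting arguments of the sort the paper develops for the Kneser families. That is an honest and well-informed sketch of the landscape, and your identification of the known five exceptions and of the lifting recursion as a candidate inductive engine is accurate. But to be clear, nothing in your write-up constitutes a proof, nor does it claim to; the ``hard part'' paragraph is exactly right that no uniform bridge from the Cayley case to the general case is known, and that is where every existing approach stalls. Since the paper itself offers no proof of Conjecture~\ref{lov}, there is no discrepancy to flag beyond the structural one that this item was never a provable target in the first place.
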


We are motivated primarily by the question of the Hamiltonicity of the odd graphs, and how the middle levels graphs may play a role in the answer to this question. Indeed the collection of odd graphs gives us in $O_3,$ one of the known counterexamples to the Lov\'asz Conjecture, and one of the other counterexamples is naturally embedded in the odd graph $O_4$. It is still an open question whether or not the remaining odd graphs are Hamiltonian. 

To better understand the myriad connections between the odd graphs and the middle levels graphs, we must first define covers and bijective covers of graphs. A graph morphism is a \textit{covering} if it is injective, $n$-to-1  for some integer $n$ and is a bijection when restricted to any vertex and its incident edges. When $\phi:G\to H$ is a covering, we say that $G$ \textit{covers} (or is a \textit{cover} of) $H$. If additionally, $G$ is a 2-to-1 cover and bipartite, then we say that $G$ is a \textit{bipartite double cover}. The next proposition is then clear.

\begin{prop} For any finite graph $G$, $G$ has a finite bipartite double cover $B$. 
\end{prop}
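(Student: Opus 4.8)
The plan is to give the standard explicit construction — the tensor product $B:=G\times K_2$, classically called the bipartite double cover — and then verify the axioms directly. Concretely, I would set $V(B):=V(G)\times\set{0,1}$ and declare $(u,i)\sim(v,j)$ in $B$ exactly when $u\sim v$ in $G$ and $i\neq j$. Since $(u,u)\notin E(G)$, no edge of $B$ is a loop, and the relation is symmetric because adjacency in $G$ is, so $B$ is a legitimate simplicial graph in the sense of the Background section; it is finite because $V(G)$ is finite and $\abs{V(B)}=2\abs{V(G)}$.

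Next I would exhibit the bipartition and the covering map simultaneously. For bipartiteness, take $U:=V(G)\times\set{0}$ and $W:=V(G)\times\set{1}$; by construction every edge of $B$ joins a vertex of $U$ to a vertex of $W$ and $U\cap W=\varnothing$, which is exactly the required partition. Then define the projection $\phi:B\to G$ by $\phi(v,i):=v$. If $(u,i)\sim(v,j)$ in $B$ then in particular $u\sim v$ in $G$, so $\phi$ preserves adjacency and is a graph morphism; moreover $\phi^{-1}(v)=\set{(v,0),(v,1)}$ for every $v\in V(G)$, so $\phi$ is surjective and exactly $2$-to-$1$.

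The only clause requiring a small argument is the local-bijectivity condition in the definition of a covering. Here I would fix a vertex $(v,i)$ of $B$ and observe that its incident edges are precisely the pairs $\{(v,i),(u,1-i)\}$ with $u\sim v$ in $G$, and that $\phi$ sends such an edge to the edge $\{v,u\}$ of $G$. Since $u\mapsto(u,1-i)$ is a bijection between the neighbours of $v$ in $G$ and the neighbours of $(v,i)$ in $B$, the map $\phi$ restricts to a bijection between the edges incident to $(v,i)$ and the edges incident to $v$, compatibly with endpoints. Hence $\phi$ is a covering, it is $2$-to-$1$, and $B$ is bipartite, so $B$ is a finite bipartite double cover of $G$.

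There is no genuine obstacle in this argument; the only point worth flagging is that $B$ need not be connected — indeed if $G$ is already bipartite then $B\cong G\sqcup G$ — but the definition of bipartite double cover stated above does not demand connectedness, so this causes no difficulty. (If one prefers to read the ``injective, $n$-to-$1$'' wording of the covering definition literally, the relevant content for $n>1$ is surjectivity together with local bijectivity, both of which $\phi$ satisfies.)
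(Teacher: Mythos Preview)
Your construction is exactly the paper's: two labeled copies of $V(G)$ with cross-edges whenever the underlying vertices are adjacent in $G$, together with the projection map---you have simply written out the verification of bipartiteness and local bijectivity that the paper leaves as ``clear.'' Your aside about the awkward ``injective, $n$-to-$1$'' phrasing in the definition is well taken, and your reading (surjective plus locally bijective) is the intended one.
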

\begin{proof} Take two copies of $V(G),$ $V_1$ and $V_2$ and set $V(B)=V_1\cup V_2$ where we treat elements of $V_1$ and $V_2$ as distinct, even when they represent the same vertex of $G$. Define the edge set of $B$ to be the set $$E(B):=\set{(v,w)|v\in V_1,\ w\in V_2,\ \textrm{and}\ (v,w)\in E(G)}.$$ It is clear then that this is both bipartite and a 2-to-1 cover. 
\end{proof}

Bipartite double covers play an important role in our further discussions, as theorem \ref{dcovthm} by Simpson allows us to use these to give one of the connections between the class of odd graphs and the class of middle levels graphs. To define these two classes, we will start with the richer class of Kneser graphs, and specialize several results about Kneser graphs to the classes of odd graphs and middle levels graphs. 

The \textit{Kneser graph} $K_{n,k}$ is the graph with vertex set the $k$\textit{-blocks} of $[n]=\set{1,2,\dots,n}$, that is: $$V(K_{n,k}):=\set{v\subseteq[n]|\ |v|=k}.$$ Let the edge set of $K_{n,k}$ be those pairs of sets with empty intersection $$E(K_{n,k}):=\set{(u,v)| u\cap v=\varnothing}.$$ The \textit{bipartite Kneser graph} $B_{n,k}$ is the graph with vertex set the $k-$ and $(n-k)$-blocks of $[n]$ $$V(B_{n,k}):=\set{v\subseteq [n]|\ |v|=k\ \textrm{or}\ |v|=n-k}$$ and edge set those pairs of $k$ and $(n-k)$ blocks wherein one set is a proper subset of the other $$E(B_{n,k}):=\set{(v,w)|\ |v|=k,\ |w|=n-k, v\subseteq w\textrm{ or }w\subseteq v}.$$ Simpson proved \cite{Simpson} that in fact $B_{n,k}$ is the bipartite double cover of $H_{n,k}$, and we will reproduce a proof here.

\begin{thm}[Simpson, 1991]\label{dcovthm} For any pair of integers $n,k$ with $k<n$, $B_{n,k}$ is the bipartite double cover of $K_{n,k}.$ 
\end{thm}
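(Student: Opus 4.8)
The plan is to produce an explicit isomorphism from $B_{n,k}$ onto the bipartite double cover of $K_{n,k}$ built in the preceding proposition, the key device being complementation inside $[n]$, which identifies the $(n-k)$-blocks of $[n]$ with a second copy of the $k$-blocks. I will assume $2k<n$, so that the $k$-blocks and $(n-k)$-blocks are families of genuinely different size; then, for a $k$-block $v$ and an $(n-k)$-block $w$, the defining condition ``$v\subseteq w$ or $w\subseteq v$'' of an edge of $B_{n,k}$ can only be met via $v\subseteq w$.

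First I would pin down a concrete model of the double cover $B$ of $K_{n,k}$: take disjoint copies $V_1$ and $V_2$ of the set of $k$-blocks of $[n]$, put $V(B)=V_1\cup V_2$, and declare $a\in V_1$ and $b\in V_2$ adjacent in $B$ exactly when $a\cap b=\varnothing$; by the construction in the preceding proposition, $B$ is a bipartite $2$-to-$1$ cover of $K_{n,k}$, with covering map sending each copy of a $k$-block back to that block. Next I would define $\phi\colon V(B_{n,k})\to V(B)$ by sending each $k$-block $v$ to its copy in $V_1$ and each $(n-k)$-block $w$ to the copy of the $k$-block $[n]\setminus w$ in $V_2$. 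Because the identity on $k$-blocks and complementation from $(n-k)$-blocks to $k$-blocks are both bijections, $\phi$ is a bijection, and it visibly carries the bipartition of $B_{n,k}$ to the bipartition of $B$.

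It then remains to check that $\phi$ is adjacency-preserving in both directions. Every edge of $B_{n,k}$ is a pair $(v,w)$ with $v$ a $k$-block, $w$ an $(n-k)$-block and $v\subseteq w$; and $v\subseteq w$ holds if and only if $v\cap([n]\setminus w)=\varnothing$, that is, if and only if $\phi(v)\cap\phi(w)=\varnothing$, which is precisely the condition for $(\phi(v),\phi(w))$ to be an edge of $B$. Reading this chain of equivalences backwards shows that every edge of $B$ arises in this way, so $\phi$ is a graph isomorphism. Since $B$ is a bipartite double cover of $K_{n,k}$, so is $B_{n,k}$, which is the claim.

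I do not expect a genuine obstacle: the substance of the argument is the single observation that complementation converts the containment incidence defining $B_{n,k}$ into the disjointness incidence defining $K_{n,k}$. The only points requiring care are the restriction to the range $2k<n$ (so that the two block sizes differ, hence only one containment direction occurs and the two vertex classes of $B_{n,k}$ are disjoint families), and the bookkeeping needed to keep $V_1$ and $V_2$ distinct, so that $\phi$ is recognized as the $2$-to-$1$ cover rather than merely some bipartite graph mapping onto $K_{n,k}$.
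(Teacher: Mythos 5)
Your proposal is correct and rests on the same key device as the paper's proof: complementation $w\mapsto[n]\setminus w$ identifies the $(n-k)$-blocks with a second copy of the $k$-blocks and converts the containment incidence of $B_{n,k}$ into the disjointness incidence of $K_{n,k}$ (the paper likewise restricts attention, without loss of generality, to $k<n-k$). The only difference is packaging: you exhibit an isomorphism onto the canonical double cover from the preceding proposition and check adjacency in both directions, whereas the paper writes down the $2$-to-$1$ covering map $B_{n,k}\to K_{n,k}$ directly and verifies local bijectivity by a degree count; composing your isomorphism with the canonical projection recovers exactly the paper's map.
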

\begin{proof} It is obvious that $B_{n,k}$ has twice as many vertices as $K_{n,k}$ and there is a natural, 2-to-1, injective function $\phi:V(B_{n,k})\to V(K_{n,k})$ given by $$\phi(v)=\left\{\begin{array}{ll}v & |v|=k\\ \textrm{$[n]$}-v & |v|=n-k\\ \end{array}\right.$$ We claim that this is a graph morphism. Suppose that $u\sim v$ in $B_{n,k}$ then without loss of generality assume that $u\subseteq v$. If $|u|=k$ then $|v|=n-k$ and $\phi(u)\cap\phi(v)=u\cap ([n]-v)=\varnothing$, so $\phi(u)\sim\phi(v)$. The proof is similar in the case where $|u|=n-k$. To show that $\phi$ is locally bijective, it suffices to show that vertex degree is preserved by $\phi.$ Let $u\in V(B_{n,k})$ and without loss of generality, assume that $k<n-k$ and $|u|=k$, then $$d(u)=\binom{n-k}{n-2k}=\binom{n-k}{k}=d(\phi(u))$$ giving the result. 
\end{proof}

Both the Kneser graphs and bipartite Kneser graphs form classes of graphs which have been well studied and which have a number of interesting properties. We can readily see that the symmetric group $S_n$ is a subgroup of the automorphism groups of the Kneser graph $K_{n,k}$ (see \cite{GodRoyle}, lemma 1.6.2), and since the bipartite Kneser graphs are double covers of the Kneser graphs, all automorphisms of $K_{n,k}$ may be lifted to $B_{n,k}$, and as such the symmetric group $S_n$ is a subgroup of $B_{n,k}$ as well. We combine these results in the following theorem.

\begin{thm}\label{thm:knesersymmetric} The symmetric group $S_n$ is a subgroup of the automorphism groups Aut$(K_{n,k})$ and Aut$(B_{n,k})$ for all choices of $k$.
\end{thm}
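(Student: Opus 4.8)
The plan is to produce an explicit injective homomorphism from $S_n$ into each of the two automorphism groups, coming from the tautological permutation action of $S_n$ on $[n]$. Given $\sigma\in S_n$, extend it to subsets of $[n]$ by $\sigma(v)=\set{\sigma(i)\mid i\in v}$. This is a bijection of the power set of $[n]$ that preserves cardinality, so it restricts simultaneously to a permutation of the $k$-blocks and to a permutation of the $(n-k)$-blocks, hence to a permutation $\rho(\sigma)$ of $V(K_{n,k})$ and of $V(B_{n,k})$. Because $(\sigma\tau)(v)=\sigma(\tau(v))$ for any subset $v$, the assignment $\sigma\mapsto\rho(\sigma)$ is a group homomorphism into the symmetric group on the relevant vertex set; the only real work is to check that its image lands in the automorphism group and that it is injective.

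Next I would verify adjacency preservation. For $K_{n,k}$: since $\sigma$ is a bijection of $[n]$ we have $\sigma(u)\cap\sigma(v)=\sigma(u\cap v)$, so $u\cap v=\varnothing$ forces $\sigma(u)\cap\sigma(v)=\varnothing$; thus $\rho(\sigma)$ preserves the edge set of $K_{n,k}$. For $B_{n,k}$: $\sigma$ preserves inclusion, i.e. $v\subseteq w\iff\sigma(v)\subseteq\sigma(w)$, and it preserves cardinality, so it carries an edge $(v,w)$ with $|v|=k$, $|w|=n-k$, $v\subseteq w$ to an edge of exactly the same form (and likewise in the orientation $w\subseteq v$). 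In both cases $\rho(\sigma^{-1})$ is a two-sided inverse of $\rho(\sigma)$, so $\rho(\sigma)$ is not merely a graph morphism but an automorphism; hence $\rho\colon S_n\to\mathrm{Aut}(K_{n,k})$ and $\rho\colon S_n\to\mathrm{Aut}(B_{n,k})$.

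Finally, to get \emph{subgroup} rather than merely a homomorphic image, I would check injectivity of $\rho$. Suppose $\rho(\sigma)$ is the identity, so $\sigma$ fixes every $k$-block setwise (for $k$ in the range where these graphs are the intended objects, in particular $1\le k\le n-1$). For any $i\in[n]$, if $i\in A$ then $\sigma(i)\in\sigma(A)=A$, so $\sigma(i)$ lies in every $k$-block containing $i$; since for each $j\ne i$ there is a $k$-block containing $i$ but not $j$, the intersection of all $k$-blocks containing $i$ is $\set{i}$, whence $\sigma(i)=i$. Thus $\sigma=\mathrm{id}$ and $\rho$ is injective, giving the two claimed embeddings. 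As an alternative to the direct argument for $K_{n,k}$ one may instead cite \cite[Lemma 1.6.2]{GodRoyle}, and then deduce the $B_{n,k}$ case from Theorem \ref{dcovthm} by noting that each $\sigma\in S_n$ acts on $V(B_{n,k})$ compatibly with the $2$-to-$1$ covering map $\phi$, so every automorphism of $K_{n,k}$ induced by $S_n$ lifts to $B_{n,k}$. I do not anticipate a genuine obstacle here — the content is a routine but careful verification; the only subtle points are the injectivity step (which degenerates when $k=n$, so one should keep to the relevant range of $k$) and phrasing the inclusion-preservation argument for $B_{n,k}$ so that it covers both orientations in the definition of its edge set.
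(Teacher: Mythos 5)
Your proof is correct, and it takes a more self-contained route than the paper, which offers essentially no direct verification: it cites Godsil and Royle (Lemma 1.6.2) for the claim about $K_{n,k}$ and then asserts that, since $B_{n,k}$ is a double cover of $K_{n,k}$, every automorphism of $K_{n,k}$ lifts to $B_{n,k}$, whence $S_n$ embeds in Aut$(B_{n,k})$ as well. Your explicit construction of $\rho(\sigma)$ on blocks, with the checks that it preserves empty intersection (for $K_{n,k}$) and cardinality plus inclusion (for $B_{n,k}$), together with the injectivity argument via intersections of the $k$-blocks containing a fixed element, proves both halves in one stroke. It also quietly repairs a gap in the paper's lifting argument: lifts of automorphisms to a double cover need not assemble into a subgroup isomorphic to the original group (one can pick up an extension by the deck transformation), so one really does need the observation you make at the end --- that the set-wise action of each $\sigma$ on $V(B_{n,k})$ is already an automorphism commuting with the covering map $\phi$ --- to conclude that the copy of $S_n$ splits off upstairs. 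What the paper's approach buys is brevity and deference to a standard reference; what yours buys is a complete, elementary argument and the injectivity check that the word ``subgroup'' actually requires. The one caveat, which you correctly flag, is that injectivity of $\rho$ fails in the degenerate cases $k=0$ and $k=n$; the theorem's phrase ``for all choices of $k$'' should be read with the restriction $1\le k\le n-1$, consistent with the hypothesis $k<n$ the paper imposes in Theorem \ref{dcovthm}.
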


As a result, we have the immediate corollaries:

\begin{cor} Kneser graphs and bipartite Kneser graphs are vertex transitive.
\end{cor}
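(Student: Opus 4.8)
The plan is to derive both assertions directly from Theorem \ref{thm:knesersymmetric} together with the elementary fact that, for every $j$, the symmetric group $S_n$ acts transitively on the collection of $j$-subsets of $[n]$ (given two such subsets, extend any bijection between them to a permutation of $[n]$).

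For the Kneser graph $K_{n,k}$ this is all that is needed: given vertices $u,v$, i.e.\ $k$-blocks of $[n]$, pick $\sigma\in S_n$ with $\sigma(u)=v$; by Theorem \ref{thm:knesersymmetric} $\sigma$ acts on $K_{n,k}$ as an automorphism, and it sends $u$ to $v$. For the bipartite Kneser graph $B_{n,k}$ the vertex set splits into the $k$-blocks and the $(n-k)$-blocks, and the same argument handles any two vertices lying in the same one of these two classes. To move between the classes I would bring in the complementation map $c\colon V(B_{n,k})\to V(B_{n,k})$, $c(v)=[n]\setminus v$: it interchanges $k$-blocks with $(n-k)$-blocks, and since $v\subseteq w$ if and only if $[n]\setminus w\subseteq [n]\setminus v$ it preserves adjacency, so $c\in\mathrm{Aut}(B_{n,k})$. (This $c$ is precisely the non-trivial deck transformation of the double cover $B_{n,k}\to K_{n,k}$ of Theorem \ref{dcovthm}.) Then for a $k$-block $u$ and an $(n-k)$-block $v$ one first sends $u$ to $[n]\setminus v$ by a suitable element of $S_n$ and then applies $c$, obtaining an automorphism carrying $u$ to $v$.

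There is essentially no obstacle here; the one point worth flagging is that for $k\neq n-k$ the group $S_n$ by itself cannot act transitively on $V(B_{n,k})$, since permutations preserve cardinality, so the extra automorphism $c$ swapping the two sides of the bipartition is genuinely required. Everything else is the routine transitivity of $S_n$ on equicardinal subsets of $[n]$, already packaged in Theorem \ref{thm:knesersymmetric}.
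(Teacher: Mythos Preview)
Your argument is correct, and in fact more careful than the paper's own treatment: the paper states this result as an ``immediate corollary'' of Theorem~\ref{thm:knesersymmetric} with no further proof. Your observation that for $k\neq n-k$ the action of $S_n$ alone cannot be transitive on $V(B_{n,k})$---since permutations preserve cardinality---is exactly right, and the paper's phrasing glosses over this. Bringing in the complementation map $c$ (which the paper does introduce, but only later, as the automorphism $\kappa$ of $B_n$) is the correct and necessary fix. So your route agrees with the paper's intended one for $K_{n,k}$ and supplies the missing ingredient for $B_{n,k}$.
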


\begin{cor} Kneser graphs $K_{n,k}$ and bipartite Kneser graphs $B_{n,k}$ are $\binom{n-k}{k}-$ regular
\end{cor}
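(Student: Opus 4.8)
The plan is to leverage the preceding corollary. Theorem \ref{thm:knesersymmetric} exhibits $S_n$ as a group of automorphisms of both $K_{n,k}$ and $B_{n,k}$, and this action is transitive on vertices (the $k$-blocks are permuted among themselves, and in the bipartite case the $(n-k)$-blocks as well, with complementation $u\mapsto[n]\setminus u$ swapping the two orbits if one wishes). Hence both graphs are vertex transitive and therefore regular, so it suffices to compute $d(v)$ at a single conveniently chosen vertex in each graph; the common value is the claimed regularity.

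First I would treat $K_{n,k}$. Fix the $k$-block $v=\set{1,2,\dots,k}$. A vertex $w$ is adjacent to $v$ precisely when $w\cap v=\varnothing$, i.e.\ when $w\subseteq[n]\setminus v$; since $|[n]\setminus v|=n-k$, the number of such $w$ is $\binom{n-k}{k}$. Thus $K_{n,k}$ is $\binom{n-k}{k}$-regular. (If $2k>n$ there are no such $w$ and $\binom{n-k}{k}=0$, which is consistent with $K_{n,k}$ being edgeless in that range.)

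Next, $B_{n,k}$, which is bipartite with parts the $k$-blocks and the $(n-k)$-blocks. Taking again $v=\set{1,\dots,k}$, its neighbours are the $(n-k)$-blocks $w$ with $v\subseteq w$, and such a $w$ is obtained by adjoining $n-2k$ of the $n-k$ elements of $[n]\setminus v$ to $v$, so $d(v)=\binom{n-k}{n-2k}=\binom{n-k}{k}$ — exactly the count already used in the proof of Theorem \ref{dcovthm}. Symmetrically, an $(n-k)$-block $w$ is adjacent to precisely the $k$-blocks contained in $w$, of which there are $\binom{n-k}{k}$. Hence both sides of the bipartition have the same degree, and $B_{n,k}$ is $\binom{n-k}{k}$-regular.

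There is essentially no obstacle here: the content is a one-line disjointness/containment count on each side. The only point needing the slightest care is verifying that the two sides of the bipartition of $B_{n,k}$ have equal degree, so that "regular" is meaningful — this follows either from the complementation symmetry or directly from the two counts above together with the identity $\binom{n-k}{n-2k}=\binom{n-k}{k}$.
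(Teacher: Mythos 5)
Your proof is correct and matches the paper's (implicit) argument: the paper states this as an immediate corollary, with the degree count $\binom{n-k}{n-2k}=\binom{n-k}{k}$ already appearing inside the proof of Theorem \ref{dcovthm}, and your reduction via vertex transitivity plus a count at one vertex on each side is exactly that reasoning made explicit.
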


While Kneser graphs and bipartite Kneser graphs have two parameters, we may define natural subclasses of both classes that are each determined by only one parameter. We define the \textit{odd graph} $O_n$ to be the Kneser graph $K_{2n-1,n-1}$ and the \textit{middle levels graph} $B_n$ to be the bipartite Kneser graph $B_{2n-1,n-1}$. The name ``middle levels graph'' derives from the fact that the bipartite graph we have here described is isomorphic to the middle two levels of the skeleton of the $(2n-1)-$cube. Under this interpretation, we consider a `level' of the cube to be all of the vertices of the skeleton that are a given distance from an arbitrarily chosen vertex, thus there is only the vertex itself at the 0-level and a single vertex at the $(2n-1)$-level. The largest two levels are then the $n$- and $(n-1)-$levels and they comprise the middle two levels of the cube. Since the $(2n-1)-$cube may be embedded into the $(2n-1)-$dimensional sphere in a manner in which opposite vertices of the cube are antipodal in the sphere, we will often use the language of \textit{antipodal vertices} to refer to the vertices of the middle levels graph which are antipodal under this embedding. In fact, it can be readily seen from corollary \ref{cor:middubodd} that the odd graph results from the middle levels graph by identifying these antipodal vertices, and thus the odd graph $O_n$ may always be embedded in the $(2n-1)-$dimensional real projective plane. This also gives us a geometric explanation for the prevalence of middle levels subgraphs in the odd graphs. As these two classes are sub-classes of the class of Kneser graphs, we can restate any of the above theorems in terms of odd graphs and middle levels graphs. In particular:

\begin{cor} Odd graphs $O_n$ and middle levels graphs $B_n$ are $n-$regular.
\end{cor}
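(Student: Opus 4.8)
The plan is to obtain this as an immediate specialization of the preceding corollary, which asserts that both $K_{n,k}$ and $B_{n,k}$ are $\binom{n-k}{k}$-regular. Since by definition $O_n = K_{2n-1,n-1}$ and $B_n = B_{2n-1,n-1}$, I would substitute $n \mapsto 2n-1$ and $k \mapsto n-1$ into that formula and simplify the resulting binomial coefficient.

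After the substitution the common degree is $\binom{(2n-1)-(n-1)}{\,n-1\,} = \binom{n}{n-1} = n$, so both graphs are $n$-regular as claimed. It is worth noting in passing that the hypothesis $k<n$ of the cited corollary becomes $n-1 < 2n-1$, which holds for every $n\geq 1$, so nothing prevents the specialization. Alternatively, one could argue directly from the definitions: a fixed $(n-1)$-block $v\subseteq[2n-1]$ is disjoint from precisely those $(n-1)$-blocks contained in the complement $[2n-1]-v$, of which there are $\binom{n}{n-1}=n$, with the corresponding count for $B_n$ following either in the same way or by invoking Theorem~\ref{dcovthm}. Either route makes clear that the only real content is the evaluation $\binom{n}{n-1}=n$, so there is no substantive obstacle to overcome.
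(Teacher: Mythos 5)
Your proof is correct and matches the paper's (implicit) argument: the corollary is stated in the paper as an immediate specialization of the preceding corollary that $K_{n,k}$ and $B_{n,k}$ are $\binom{n-k}{k}$-regular, and the substitution $\binom{(2n-1)-(n-1)}{n-1}=\binom{n}{n-1}=n$ is exactly the intended computation. The direct counting argument you sketch as an alternative is also fine but adds nothing beyond what the cited corollary already provides.
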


\begin{cor}\label{cor:middubodd} The middle levels graph $B_n$ is a bipartite double cover of $O_n$.
\end{cor}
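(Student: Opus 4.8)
The plan is to derive this directly from Simpson's Theorem \ref{dcovthm}. By definition $O_n = K_{2n-1,n-1}$ and $B_n = B_{2n-1,n-1}$, so the statement to be proved is exactly Theorem \ref{dcovthm} applied to the bipartite Kneser graph $B_{2n-1,n-1}$ and the Kneser graph $K_{2n-1,n-1}$. The first thing I would check is that the hypothesis of Theorem \ref{dcovthm} --- that the two subscript parameters $p,q$ of $K_{p,q}$ satisfy $q < p$ --- holds here, where $(p,q) = (2n-1,\, n-1)$: indeed $n-1 < 2n-1$ for every $n \geq 1$. Hence Theorem \ref{dcovthm} applies verbatim and gives that $B_n = B_{2n-1,n-1}$ is the bipartite double cover of $K_{2n-1,n-1} = O_n$.

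Although this is essentially a one-line instantiation, I would record the covering map explicitly in the specialized setting, since it underlies the geometric picture and is reused in later sections. The vertex set of $B_n$ consists of the $(n-1)$-subsets and the $n$-subsets of $[2n-1]$, and the map $\phi \colon V(B_n) \to V(O_n)$ provided by Theorem \ref{dcovthm} fixes each $(n-1)$-subset and sends each $n$-subset $v$ to its complement $[2n-1] - v$, which is an $(n-1)$-subset. The two $\phi$-preimages $v$ and $[2n-1] - v$ of a given vertex of $O_n$ are precisely the antipodal pair discussed in the text, so identifying antipodal vertices of the middle two levels of the $(2n-1)$-cube recovers $O_n$, in agreement with the claimed double cover.

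The only point needing any care is that the degree count inside the proof of Theorem \ref{dcovthm} specializes correctly. That proof assumed $q < p-q$ when computing $d(u)$; here $q = n-1$ and $p-q = (2n-1) - (n-1) = n$, so the inequality $n-1 < n$ holds and the computation yields $d(u) = \binom{n}{1} = n$, matching the $n$-regularity of $O_n$ and $B_n$ recorded in the preceding corollary. I do not anticipate any genuine obstacle: all of the substance is in Theorem \ref{dcovthm}, and this corollary merely specializes it to the one-parameter families $O_n$ and $B_n$.
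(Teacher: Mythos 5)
Your proposal is correct and matches the paper's treatment exactly: the paper states this corollary without a separate proof, as an immediate specialization of Theorem \ref{dcovthm} to $O_n = K_{2n-1,n-1}$ and $B_n = B_{2n-1,n-1}$. Your explicit verification of the hypothesis $n-1 < 2n-1$, the specialized covering map, and the degree count $\binom{n}{1} = n$ are all consistent with what the paper leaves implicit.
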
 

Indeed, the covering map that we used to prove theorem \ref{dcovthm} also gives us an order-2 isomorphism of the graph $B_n$ that preserves the symmetric differences $u\vartriangle v$ of adjacent vertices in $B_n$.  

\begin{prop}The complement operation given by the function $\kappa(v)=[2n-1]-v=\overline{v}$ is an automorphism of $B_n$. Furthermore, if $u \vartriangle v=\set{a}$ for some vertices $u,v\in V(B_n)$, then $\kappa (u)\vartriangle\kappa(v)=\set{a}$ as well.
\end{prop}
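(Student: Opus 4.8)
The plan is to establish the two assertions by direct set-theoretic computation, using only that $B_n = B_{2n-1,n-1}$ has vertices of size $n-1$ and $n$ and that $(2n-1)-(n-1)=n$, so that passing to complements in $[2n-1]$ interchanges these two vertex sizes.

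First I would check that $\kappa$ is a bijection of $V(B_n)$: if $|v|=n-1$ then $|\kappa(v)|=(2n-1)-(n-1)=n$, and if $|v|=n$ then $|\kappa(v)|=n-1$, so in either case $\kappa(v)\in V(B_n)$; and since $\kappa(\kappa(v))=[2n-1]-([2n-1]-v)=v$, the map $\kappa$ is an involution, hence a bijection. Next I would verify that $\kappa$ preserves adjacency. Suppose $u\sim v$ in $B_n$; without loss of generality $|u|=n-1$ and $|v|=n$, so the defining condition for an edge of $B_n$ forces $u\subseteq v$. Complementation reverses inclusions, so $\kappa(v)\subseteq\kappa(u)$, while $|\kappa(v)|=n-1$ and $|\kappa(u)|=n$; this is precisely the incidence condition in $B_n$, so $\kappa(u)\sim\kappa(v)$. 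A bijective graph morphism from a graph to itself is an automorphism, which gives the first claim.

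For the statement about symmetric differences, I would first observe that $u\vartriangle v=\set{a}$ forces $|u|\neq|v|$, since $|u\vartriangle v|\equiv |u|+|v|\pmod 2$; hence one of $u,v$ has size $n-1$ and the other size $n$, and these are genuine vertices of $B_n$. The core of the argument is the elementary identity $\overline{A}\vartriangle\overline{B}=A\vartriangle B$, valid for all $A,B\subseteq[2n-1]$: an element $x$ lies in $\overline{A}\vartriangle\overline{B}$ iff it lies in exactly one of $\overline{A},\overline{B}$, iff it lies in exactly one of $A,B$, iff $x\in A\vartriangle B$. Taking $A=u$ and $B=v$ yields $\kappa(u)\vartriangle\kappa(v)=u\vartriangle v=\set{a}$, as required. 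In fact the same computation shows $\kappa$ preserves the symmetric difference of any pair of vertices, not only of adjacent ones, which is the remark the text makes just afterward.

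There is no substantial obstacle here: the only points requiring care are bookkeeping — recording that complementation stays inside $V(B_n)$ precisely because $2n-1=(n-1)+n$, and giving the one-line membership proof of $\overline{A}\vartriangle\overline{B}=A\vartriangle B$ — while everything else is immediate.
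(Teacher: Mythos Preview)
Your proof is correct and follows essentially the same approach as the paper: both verify that $\kappa$ swaps the two cardinality classes, both observe that complementation reverses inclusion to get the adjacency preservation, and both finish with a direct set-theoretic computation of the symmetric difference. The only cosmetic difference is that the paper handles the symmetric-difference claim by computing $u\vartriangle v = v\cap\overline{u}$ and $\overline{u}\vartriangle\overline{v}=\overline{u}\cap v$ under the assumption $u\subset v$, whereas you prove the general identity $\overline{A}\vartriangle\overline{B}=A\vartriangle B$ by a one-line membership argument; your version is slightly cleaner but not substantively different.
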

\begin{proof} Since taking the complement of a vertex switches the cardinality from either $n-1$ to $n$ or vice versa, $\kappa$ does in fact define a function from the vertex set of $B_n$ to the vertex set of $B_n$. It is clear from the definition of the complement that $\kappa$ is both injective and surjective. If $u,v\in V(B_n)$ with $u\subset v$ then $\overline{v}\subset \overline{u}$ so $\overline{u}\sim \overline{v}$ and $\kappa$ is thus an isomorphism. It remains to show that $\kappa(u)\vartriangle \kappa(v)=u\vartriangle v.$ Note that when $u\subset v$, $u\vartriangle v=(u\cup v)-(u\cap v)=v-u=v\cap \overline{u}$. Now note that $\kappa(u)\vartriangle\kappa(v)=\overline{u}\vartriangle\overline{v}=(\overline{u}\cup\overline{v})-(\overline{u}\cap\overline{v})=\overline{u}-\overline{v}=\overline{u}\cap v,$ giving the result. 
\end{proof}

This bi-regular covering gives us one of the two components of our recursion relation for both the odd graphs and the middle levels graphs. To find the second component, we need to fully explore the subgraphs of the odd graphs, and in particular a family of subgraphs obtained by the deletion of certain labels of edges.

\section{Remainder graphs and middle levels embeddings}
One of the most useful aspects of both odd graphs $O_n$ and middle levels graphs $B_n$ is our ability to assign a unique element of $[2n-1]$ to every edge of either of these two graphs. In the case of odd graphs, we assign to the edge $(u,v)$ the single element $[2n-1]-(u\cup v)$ and in the case of middle levels graphs, we assign to the edge $(u,v)$ the single element of the symmetric difference of the two vertices $u\vartriangle v$. In the sequel we will refer to these assignations as either \textit{labels} or \textit{colors}. Our ability to assign colors to the edges of these graphs gives us a useful method for defining several interesting subgraphs of either of these two graphs. 

We can define a number of subgraphs which lack a given number of colors of edges. Obviously as we remove more and more edges, the number of connected components of the graph grows, but we also find that all components with the same regularity or biregularity are isomorphic to one another. In particular, we will find that after removing $k$ colors of edges, for any $1\leq i\leq k,$ the remaining graph will always have an $(n-i,n-k+i)-$regular component and all such components are isomorphic; moreover, this isomorphism is dependent only on the biregularity of the component, and is independent of whether we are working in $O_n$ or $B_m$. 

Let us first establish some new notation. For any subset $S\subseteq[2n-1]$, $O_n(S)$ is the subgraph of $O_n$ formed by the removal of all edges with colors from $S$ and similarly, $B_n(S)$ is the subgraph of $B_n$ formed by the removal of all edges with colors from $S$. It is then readily apparent that (up to isomorphism) these graphs are completely determined by the number of elements of $S$.

\begin{prop} Let $S,T\subseteq [2n-1]$ satisfying $|S|=|T|$ then $O_n(S)\cong O_n(T)$ and $B_n(S)\cong B_n(T)$.
\end{prop}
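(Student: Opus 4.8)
The plan is to exploit the action of the symmetric group established in Theorem~\ref{thm:knesersymmetric}. Specialised to $O_n=K_{2n-1,n-1}$ and $B_n=B_{2n-1,n-1}$, that theorem gives an embedding of $S_{2n-1}$ into both $\mathrm{Aut}(O_n)$ and $\mathrm{Aut}(B_n)$ via the elementwise action $\sigma\cdot v=\set{\sigma(i)\mid i\in v}$ on vertices. The first thing I would isolate is that this action is \emph{equivariant with respect to edge colors}: if $e=(u,v)$ is an edge with color $c$, then $\sigma\cdot e=(\sigma\cdot u,\sigma\cdot v)$ has color $\sigma(c)$. For odd graphs this is the identity $[2n-1]-(\sigma\cdot u\cup\sigma\cdot v)=\sigma\bigl([2n-1]-(u\cup v)\bigr)$, which holds because $\sigma$ is a bijection of $[2n-1]$; for middle levels graphs it is the identity $(\sigma\cdot u)\vartriangle(\sigma\cdot v)=\sigma(u\vartriangle v)$, which again holds because $\sigma$ is a bijection. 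Both are one-line verifications that amount to the fact that applying a bijection commutes with complement, union, and symmetric difference.

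Next, since $S$ and $T$ are subsets of $[2n-1]$ with $|S|=|T|$, any bijection $S\to T$ extends to a permutation $\sigma\in S_{2n-1}$ with $\sigma(S)=T$. Let $\hat\sigma$ denote the induced automorphism of $O_n$ (the $B_n$ case is word-for-word the same with the middle-levels color rule). I claim $\hat\sigma$ restricts to an isomorphism $O_n(S)\to O_n(T)$. On vertices $\hat\sigma$ is a bijection, and $O_n(S)$ and $O_n(T)$ share the vertex set of $O_n$, so it remains only to check that $\hat\sigma$ carries $E(O_n(S))$ onto $E(O_n(T))$. An edge $e$ of $O_n$ lies in $O_n(S)$ exactly when its color $c$ satisfies $c\notin S$; by color equivariance the image $\hat\sigma(e)$ has color $\sigma(c)$, and $\sigma(c)\notin\sigma(S)=T$ precisely when $c\notin S$. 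Hence $e\in E(O_n(S))$ if and only if $\hat\sigma(e)\in E(O_n(T))$, and since $\hat\sigma$ is an automorphism of $O_n$ it induces a bijection between these two edge sets. Thus $\hat\sigma$ is a graph isomorphism witnessing $O_n(S)\cong O_n(T)$, and the identical argument gives $B_n(S)\cong B_n(T)$.

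I do not anticipate a genuine obstacle here; the only step requiring any care is the color-equivariance of the $S_{2n-1}$-action, and the remainder is bookkeeping: the automorphism $\hat\sigma$ sends edges to edges while relabeling each color $c$ by $\sigma(c)$, so deleting the color class $S$ and then applying $\hat\sigma$ produces exactly the graph obtained by applying $\hat\sigma$ and then deleting the color class $T$.
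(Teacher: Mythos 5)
Your proposal is correct and follows essentially the same route as the paper: both invoke the $S_{2n-1}$-action from Theorem~\ref{thm:knesersymmetric}, choose a permutation carrying $S$ to $T$, and observe that the induced automorphism relabels edge colors by $\sigma$ and hence restricts to an isomorphism $O_n(S)\to O_n(T)$ (resp.\ $B_n(S)\to B_n(T)$). The paper uses the specific product of transpositions $(s_1,t_1)\cdots(s_k,t_k)$ and leaves the color-equivariance implicit, whereas you make that step explicit --- a worthwhile addition, but not a different argument.
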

\begin{proof} Label the elements of $S$ as $\set{u_1,\dots, u_m,s_1,\dots, s_k}$ and the elements of $T$ by $\set{u_1,\dots,u_m,t_1 \dots,t_k}$ (where $S\cap T=\set{u_1,\dots,u_m}$). Then by theorem \ref{thm:knesersymmetric}, the permutation $\phi=(s_1,t_1)(s_2,t_2)\cdots(s_k,t_k)$ acts as an automorphism of either $O_n$ or $B_n$, and induces the isomorphism between the graphs (as embeddings in $O_n$ and $B_n$ respectively).
\end{proof}

For ease of notation, we will often replace the subset $S$ with its cardinality. That is, for some $k<2n-1$ and for any subset $S\subseteq[2n-1]$ with $|S|=k$ we let $O_n(k)=O_n(S)$ and $B_n(k)=B_n(S)$. By the previous proposition, this notation is well-defined. It is worth noting that both of these notations have utility: observe that for subsets $S,T\subseteq[2n-1]$, with $|S|=k$ and $|T|=m$, it is easy to verify that $O_n(S)\cap O_n(T)=O_n(S\cup T)$ while $O_n(k)\cap O_n(m)$ is not well-defined as the result depends on the size of the intersection of the sets $S$ and $T$.

The next several results will allow us to define the \textit{remainder graph}, $R_n^k$ as one of the $(n,n-k)-$biregular components of either $O_m(p)$ or $B_j(q)$. It is not clear that such a subgraph exists, and it is certainly not obvious that it is well-defined. In particular, note that implicit in the definition is a claim that the graphs are independent of the parameters $m,p,j,q$, and furthermore that such remainder graphs are not determined by the category in which we are working. Our discussion will focus on the decomposition of the odd graphs in particular, because after a bit of work, we will find (theorem \ref{thm:evenmidl}) that any middle levels graph may be found as a regular component of $O_n(k)$ for some suitably chosen values of $n,k$. This result allows us to conclude that the remainder graphs are independent of the class in which we are working. To get to that point, we have several preliminary results. 

Proposition \ref{prop:bireg} will show existence of graphs of varying bi-regularity as subgraphs of $O_n(k)$, while proposition \ref{prop:isomin} demonstrates that for a given pair of parameters $n,k$, all components of the same biregularity (or regularity) are isomorphic. Proposition \ref{prop:isomout} shows us that in fact the remainder graphs are independent of the parameters $m$ and $p$. Additionally, note that the remainder graphs grow rapidly in size and complexity, as $n$ increases. (see figure \ref{fig:R52} for a depiction of $R_5^2$)

\begin{figure}[hbt]
	\centering
		\includegraphics[width=1\textwidth]{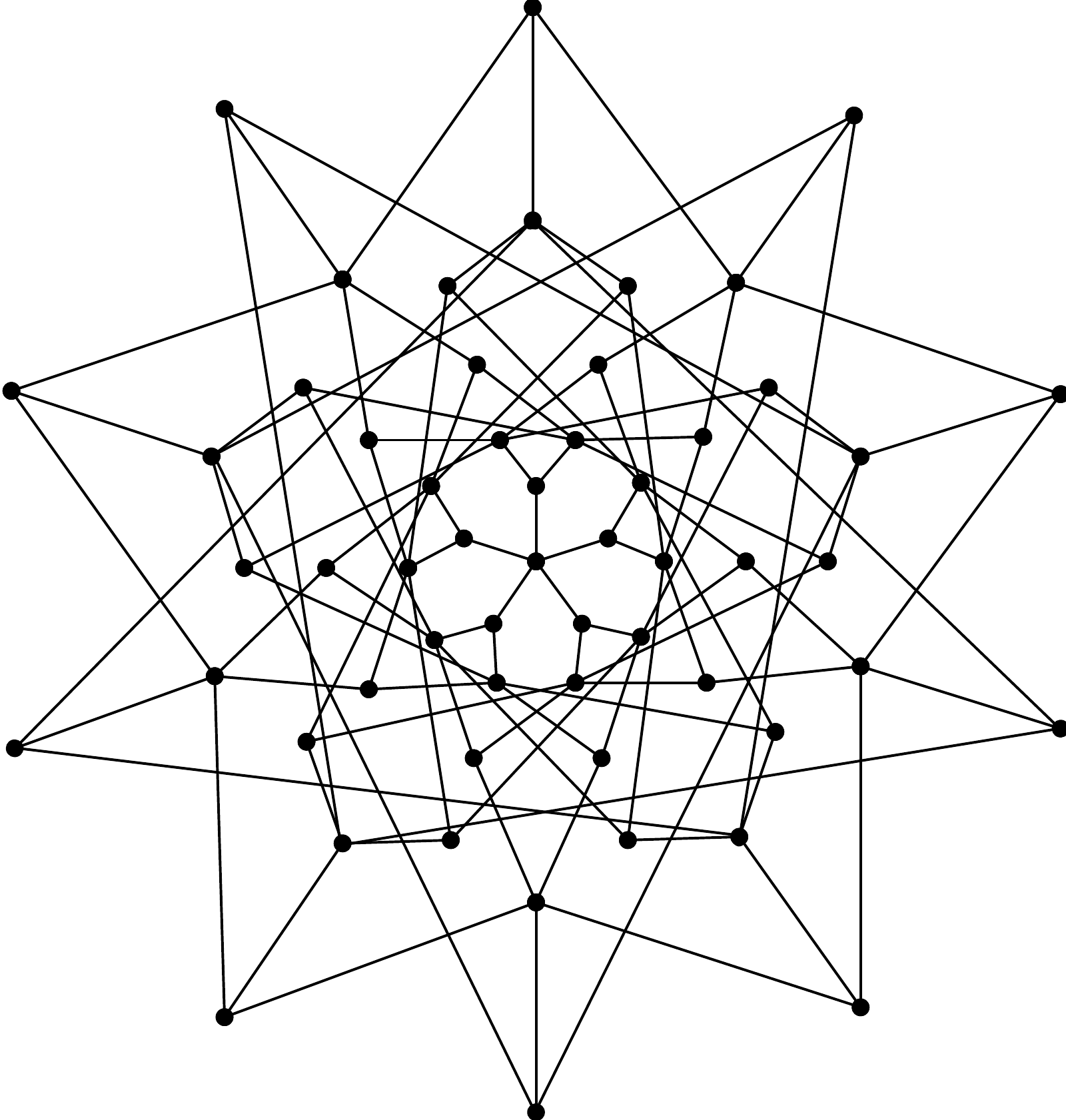}
	\caption{The remainder graph $R_5^2$, the biregular component obtained by deleting two colors from $O_5$.}
	\label{fig:R52}
\end{figure}

\begin{prop}\label{prop:bireg} For every $0< k<n$ and for every $0\leq i\leq \left\lceil \frac{k}{2}\right\rceil$, the graph $O_n(k)$ has an $(n-i,n-k+i)$-biregular component for $1\leq i\leq k$. If $k=n$ then $O_n(k)$ has an $(n-i,n-k+i)$-biregular component only for $0<i\leq \left\lceil\frac{k}{2}\right\rceil$, and has a single-vertex component when $i=0.$
\end{prop}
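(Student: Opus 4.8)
The plan is to analyze $O_n(S)$, for a fixed $k$-set $S\subseteq[2n-1]$, by sorting the vertices according to the statistic $t(v)=|v\cap S|$, which I will call the \emph{type} of $v$. First I would record the elementary fact that the colors of the edges of $O_n$ incident to a vertex $v$ are exactly the elements of $[2n-1]\setminus v$: the edge $(v,w)$ receives the color $([2n-1]\setminus v)\setminus w$, and as $w$ ranges over the $(n-1)$-subsets of the $n$-set $[2n-1]\setminus v$, this single missing element runs over each element of $[2n-1]\setminus v$ exactly once. Consequently, passing from $O_n$ to $O_n(S)$ removes at $v$ precisely the $|S\setminus v|$ edges whose colors lie in $S$ but not in $v$, so
\[
\deg_{O_n(S)}(v)=n-|S\setminus v|=n-k+t(v).
\]
A short count on $|v\cap S|$, using $|v|=n-1$ and $|S|=k$, then shows that $t(v)$ attains every value in $\{0,1,\dots,k\}$ when $k<n$, and every value in $\{0,\dots,k-1\}$ when $k=n$ (the value $k$ being impossible because $|v|=n-1<k=n$).

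The key structural observation is that the edges of $O_n(S)$ join types complementarily. If $(v,w)$ is an edge of $O_n(S)$ with color $c$, then $v$, $w$, $\{c\}$ partition $[2n-1]$, whence $t(v)+t(w)+|S\cap\{c\}|=k$; since the edge survived, $c\notin S$, and therefore $t(w)=k-t(v)$. Hence for each $i$ the set $W_i$ of vertices of type $i$ or type $k-i$ spans no edge of $O_n(S)$ to its complement, i.e. $W_i$ is a union of connected components of $O_n(S)$. When $i\ne k-i$, the induced subgraph on $W_i$ is bipartite with parts the type-$i$ vertices and the type-$(k-i)$ vertices, and by the degree formula it is $(n-k+i,\,n-i)$-biregular, which is $(n-i,\,n-k+i)$-biregular after exchanging the two sides (this exchange symmetry is also why the enumeration of distinct types can be cut off at $\lceil k/2\rceil$); when $k$ is even and $i=k/2$ it is $(n-\tfrac{k}{2})$-regular, the degenerate case of the stated biregularity.

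To pass from ``$W_i$ is a union of such components'' to ``$O_n(k)$ has such a component'', I would observe that in any connected component $C$ of $W_i$ the degree of a vertex in $C$ equals its degree in $O_n(S)$, because every neighbour of a $W_i$-vertex again lies in $W_i$; thus $C$ is itself $(n-i,\,n-k+i)$-biregular as soon as it contains vertices of both types. For $0<k<n$ and $0\le i\le k$ this is automatic, since every vertex of $W_i$ then has degree at least $n-k\ge 1$ and so has a neighbour, necessarily of the complementary type; this in fact shows that \emph{every} component of $W_i$ is biregular, which is what the subsequent isomorphism results will exploit. For $k=n$ and $1\le i\le\lceil k/2\rceil$ the types $i$ and $k-i$ both lie in $\{1,\dots,n-1\}$, all vertices again have positive degree, and the same argument applies. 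Finally, for $k=n$ and $i=0$ the unique type-$0$ vertex $[2n-1]\setminus S$ has degree $n-k=0$ by the degree formula and so is the asserted single-vertex component, while the top type $k=n$ is empty, so no $(0,n)$-biregular component occurs.

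The main obstacle is really the initial reorganization: recognizing that $t(v)=|v\cap S|$ is the right invariant and that it behaves complementarily along surviving edges. After that, everything reduces to the degree count and a few small case distinctions; the only genuine care needed is the boundary behaviour at $k=n$ (empty top type, isolated bottom vertex) and the convention for the self-paired index $i=k/2$.
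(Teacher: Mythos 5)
Your proof is correct, and while it rests on the same underlying fact (edges of $O_n$ join complementary sets, so deleting the colors in $S$ sorts vertices by their relation to $S$), your decomposition is genuinely coarser than the paper's. The paper fixes the actual subset $T=v\cap S$, not just its cardinality: for each $i$-subset $T\subseteq S$ it forms $U_T=\{v:v\cap S=T\}$ and $W_T=\{v:v\cap S=S-T\}$, checks which colors are lost at each class, and verifies that surviving neighbours of $W_T$-vertices land in $U_T$. You instead classify vertices by the single statistic $t(v)=|v\cap S|$, derive the uniform degree formula $\deg_{O_n(S)}(v)=n-k+t(v)$, and use the partition $[2n-1]=v\sqcup w\sqcup\{c\}$ to get the complementarity $t(v)+t(w)=k$ across surviving edges. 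For the bare existence statement this buys some cleanliness: the closure of your class $W_i$ under adjacency is symmetric in the two types by construction (the paper only explicitly checks neighbours of $W_T$-vertices, leaving the $U_T$ side to the implicit identification $U_{S-T}=W_T$), and your explicit step from ``$W_i$ is a union of components, each containing both types because every vertex has positive degree'' to ``a biregular component exists'' is a bridge the paper's write-up passes over. What the paper's finer decomposition buys is the labelling of the individual biregular pieces by the subsets $T$, which is precisely what Lemma \ref{lma:disjoint1} and Proposition \ref{prop:isomin} need in order to show the pieces are pairwise disconnected and isomorphic and to count $\binom{k}{i}$ of them; your classes $W_i$ would have to be refined back into the $U_T\cup W_T$ pieces at that stage. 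Both arguments treat the $k=n$, $i=0$ boundary the same way, via the isolated vertex $[2n-1]\setminus S$ and the emptiness of the top type.
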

\begin{proof} Let $S=\set{2n-k,2n-k+1,\dots,2n-1}$ so that $O_n(k)$ is isomorphic to $O_n(S)$. Let $T\subset S$ with $|T|=i$ and define two vertex sets \begin{equation}\label{ut} U_T:=\set{u\in V(O_n)|u\cap S=T}\end{equation} and \begin{equation}\label{wt}W_T:=\set{w\in V(O_n)|w\cap S=S-T}.\end{equation} Note that when $n=k$ and $i=0$, $W_T=\varnothing$ as in this scenario, if $w\in W_T$, then $n-1\geq |w\cap S|=|S|=n$. Furthermore, in this situation, $U_T$ is the singleton $\set{v}$ with $v=[2n-1]-S$ so that the labels of the edges incident with this vertex all come from the set $S$ and are hence absent from $O_n(k)$, giving us a single vertex component of $O_n(k).$

When $k<n$, the vertices of $W_T$ in $O_n$ are all incident to an edge labeled with a color from $T$ for every color of $T$. Likewise the vertices of $U_T$ are all incident with an edge labeled with a color from $S-T$ for every color in $S-T$. As such, in $O_n(k)$, the vertices of $W_T$ all have valence $n-i$ and the vertices of $U_T$ all have valence $n-(k-i)=n-k+i$. When $i>0$ any two elements $u_1,u_2\in U_T$ may not be adjacent as $T\subset u_1\cap u_2$. Similarly, by definition $|S-T|>0$ so for any two elements $w_1,w_2\in W_T$, $w_1\cap w_2\neq\varnothing$. When $i=0$ it is not clear that two elements $u_1,u_2\in U_T$ are not adjacent, but if they were, there would be a unique element $j\in [2n-1]-(u_1\cup u_2)$ which indicates that $S=\set{j}$, but this implies that $u_1$ and $u_2$ are not adjacent in $O_n(S)$.

Now note that for any vertex $w\in W_T$ all edges incident with $w$ except those with labels from $T$ are still present in the graph $O_n(S)$ furthermore, if $w\sim u$ in $O_n(S)$, the label on the edge $(w,u)$ comes from the set $[2n-1]-S$ and thus we conclude that $T\subset u$ and $(S-T)\cap u=\varnothing$ by the definition of $w$. Thus $u\in U_T$ and we have the conclusion that the subgraph of $O_n(k)$ induced by $W_T\cup U_T$ is $(n-i,n-k+i)-$biregular.
\end{proof}

It is instructive to note that when $2i=k$ the conclusion of proposition \ref{prop:bireg} tells us that the graph $O_n(k)$ actually has components which are $(n-k/2)-$regular. We will show in theorem \ref{thm:evenmidl} that these are all isomorphic to the middle levels graph $B_{(n-k/2)}$. Additionally, proposition \ref{prop:bireg} immediately tells us that when $k$ is not even $O_n(k)$ has no regular components

\begin{cor} If $k>0$ is odd, then no component of $O_n(k)$ is regular. 
\end{cor}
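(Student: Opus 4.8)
The plan is to deduce the statement from the internal structure of $O_n(S)$ already exposed in the proof of Proposition \ref{prop:bireg}. Fix a $k$-element colour set $S\subseteq[2n-1]$, so that $O_n(k)=O_n(S)$, and recall that the sets $U_T:=\set{v\in V(O_n)\mid v\cap S=T}$, as $T$ ranges over all subsets of $S$, partition $V(O_n)$. The first and key step is the structural observation that \emph{every edge of $O_n(S)$ joins some $U_T$ to the complementary class $U_{S-T}$}. Indeed, if $v\sim w$ in $O_n(S)$ then $v\cap w=\varnothing$ and the label $j:=[2n-1]-(v\cup w)$ of this edge does not lie in $S$; writing $T=v\cap S$ and $T'=w\cap S$, disjointness of $v$ and $w$ gives $T\cap T'=\varnothing$, while $j\notin S$ forces $S\subseteq v\cup w$ and hence $S=T\cup T'$. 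Thus $T'=S-T$.

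Next I would record the two degrees involved, exactly as in the proof of Proposition \ref{prop:bireg}: a vertex of $U_T$ with $|T|=i$ has degree $n-k+i$ in $O_n(S)$, since its $k-i$ edges carrying labels from $S-T$ have been deleted, while a vertex of $U_{S-T}$ has degree $n-i$. When $k<n$ these degrees are at least $n-k\geq 1$, so $O_n(S)$ has no isolated vertices and every component contains at least one edge.

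Now I assemble the contradiction. Suppose some component $C$ of $O_n(k)$ were $r$-regular. By the previous paragraph $C$ contains an edge, which by the structural observation joins a vertex $v\in U_T$ to a vertex $w\in U_{S-T}$ for some $T\subseteq S$ with $|T|=i$. Regularity forces $r=n-k+i$ (the degree of $v$) and simultaneously $r=n-i$ (the degree of $w$), hence $n-k+i=n-i$, i.e. $k=2i$, contradicting the hypothesis that $k$ is odd.

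I do not expect a serious obstacle; the one point that genuinely needs care is ruling out a component lying entirely inside a single class $U_T$ (which would be vacuously $(n-k+i)$-regular), and this is exactly what the positive-degree observation for $k<n$ prevents: a vertex of positive degree must, by the structural observation, have a neighbour in the complementary class $U_{S-T}$, and $U_T\neq U_{S-T}$ precisely because $|T|=i\neq k-i=|S-T|$ when $k$ is odd. For completeness one should flag the degenerate regime $k\geq n$, outside the range in which the remainder graphs live: there the vertices $v$ with $|v\cap S|=k-n$ are isolated and hence trivially regular, so the statement is understood for $k<n$.
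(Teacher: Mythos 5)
Your argument is correct and is essentially the paper's own: the corollary is presented as an immediate consequence of Proposition \ref{prop:bireg}, since any component containing an edge must be $(n-i,n-k+i)$-biregular, and regularity would force $n-i=n-k+i$, i.e.\ $k=2i$, impossible for odd $k$. Your self-contained rederivation of the fact that every edge of $O_n(S)$ joins $U_T$ to its complementary class, and your flag of the degenerate isolated-vertex components when $k\ge n$ (where the statement must be read as excluding trivial single-vertex components), are both accurate refinements of that same deduction.
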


We still need to justify the claim from the discussion above that the $(n-i,n-k+i)$-biregular components of $O_n(k)$ are all isomorphic to one another. We will first show that our choice of $T$ in the proof of proposition \ref{prop:bireg} above completely determines the $(n-i,n-k+i)$-biregular components.

\begin{lma}\label{lma:disjoint1} Let $T_1,T_2$ be two distinct subsets of $S=\set{2n-k,2n-k+1,\dots,2n-1}$ that satisfy $|T_1|=i=|T_2|$. If we define $U_{T_i}$ as in (\ref{ut}), and $W_{T_i}$ as in (\ref{wt}) then \begin{enumerate} \item If $i=k/2$ and $T_1\cap T_2=\varnothing$, then $U_{T_1}=W_{T_2}$ and $W_{T_1}=U_{T_2}.$ If $i\neq k/2$ or if $i=k/2$ and $T_1\cap T_2\neq\varnothing,$ then $(U_{T_1}\cup W_{T_1})\cap(U_{T_2}\cup W_{T_2})=\varnothing.$ \item The subgraph $G_1$ of $O_n(S)$ induced by $U_{T_1}\cup W_{T_1}$ is disconnected from the subgraph $G_2$ of $O_n(S)$ induced by $U_{T_2}\cup W_{T_2}$.\end{enumerate}
\end{lma}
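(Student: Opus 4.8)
The plan is to treat the two parts in turn. Part 1 is a purely set-theoretic observation: a vertex $v\in V(O_n)$ lies in $U_T\cup W_T$ if and only if its trace $v\cap S$ equals either $T$ or $S-T$. Since each vertex has a single such trace, a vertex in $(U_{T_1}\cup W_{T_1})\cap(U_{T_2}\cup W_{T_2})$ forces the pairs $\{T_1,S-T_1\}$ and $\{T_2,S-T_2\}$ to share a member. I would then run through the four possible equalities: $T_1=T_2$ and $S-T_1=S-T_2$ are excluded because $T_1\neq T_2$, while each of $T_1=S-T_2$ and $S-T_1=T_2$ forces $|T_1|=|S-T_2|$, i.e. $i=k/2$, together with $T_1\cap T_2=\varnothing$; and in that situation $T_2=S-T_1$ and $T_1=S-T_2$, which give immediately $U_{T_1}=W_{T_2}$ and $W_{T_1}=U_{T_2}$. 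This exhausts Part 1.

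For Part 2 the key point is that each set $U_T\cup W_T$ is closed under taking neighbours in $O_n(S)$, and is therefore a union of connected components. One direction is already in the proof of Proposition \ref{prop:bireg}: a neighbour in $O_n(S)$ of any $w\in W_T$ lies in $U_T$. For the reverse inclusion I would argue directly: if $u\in U_T$ and $u\sim v$ in $O_n(S)$, then $u\cap v=\varnothing$ and the colour $\{j\}=[2n-1]-(u\cup v)$ of that edge lies outside $S$; since two adjacent vertices of $O_n=K_{2n-1,n-1}$ together omit exactly one element, $v=[2n-1]-u-\{j\}$, so $v\cap S=(S-\{j\})-(u\cap S)=S-T$, i.e. $v\in W_T$. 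Hence no edge of $O_n(S)$ leaves $U_T\cup W_T$, so it is a union of components.

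Combining the two parts finishes the lemma: by Part 1 the vertex sets $U_{T_1}\cup W_{T_1}$ and $U_{T_2}\cup W_{T_2}$ are either identical (precisely when $i=k/2$ and $T_1\cap T_2=\varnothing$, in which case $G_1=G_2$ and there is nothing to prove) or disjoint, and in the disjoint case, since each of them is a union of components of $O_n(S)$, no edge of $O_n(S)$ can join $G_1$ to $G_2$. I do not expect a serious obstacle here: the one computation needing care is the reverse neighbour inclusion, which uses the special feature of $O_n$ that two adjacent vertices determine their common non-neighbour, and I should also flag the degenerate case $k=n$, where some $W_T$ can be empty and $U_T\cup W_T$ collapses to a single isolated vertex — the ``union of components'' assertion, and hence the stated conclusion, still holds verbatim there.
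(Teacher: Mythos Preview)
Your proposal is correct and is essentially the paper's argument. Part~1 is identical in substance; for Part~2 the paper packages the same closure-under-neighbours fact as a path argument (take a shortest path in $O_n$ from $G_1$ to $G_2$, locate the first edge leaving $U_{T_1}\cup W_{T_1}$, and show its colour must lie in $S$), whereas you state directly that $U_T\cup W_T$ is a union of components of $O_n(S)$ --- the underlying computation (that an $O_n(S)$-neighbour of a vertex with trace $T$ on $S$ has trace $S-T$) is the same, and your formulation is arguably cleaner.
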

\begin{proof} \begin{enumerate}
    \item When $i=k/2$ and $T_1\cap T_2=\varnothing$, the result is obvious from the fact that $T_1=S-T_2$ and the definitions of $U_T$ and $W_T$. Now suppose that either $i= k/2$ and $T_1\cap T_2\neq\varnothing$ or that $i\neq k/2$. If $v\in U_{T_1}\cap U_{T_2}$, then $T_2=v\cap S=T_1$ which is a contradiction. Similarly, if $v\in W_{T_1}\cap W_{T_2}$, then $S-T_2=v\cap S=S-T_1$ is again a contradiction. If $v\in U_{T_1}\cap W_{T_2}$ (respectively, if $v\in W_{T_1}\cap U_{T_2}$), then $T_1=v\cap S=S-T_2$ (or $T_2=v\cap S=S-T_1$) which implies that $i=k/2$ and that $T_1\cup T_2=S$ and thus that $T_1\cap T_2=\varnothing$ which contradict our choices of $i,$ $T_1$, and $T_2$.
		\item Let $v_1\in G_1$ and $v_2\in G_2$, and let $\gamma=(v_1=)w_0,w_1,\dots, w_n,w_{n+1}(=v_2)$ be a shortest path in $O_n$ from $v_1$ to $v_2$. Since $V(G_1)$ and $V(G_2)$ are disjoint, there is a vertex $w_i$ on $\gamma$ so that $w_{i}\in V(G_1)$ and $w_{i+1}\notin V(G_1).$ Consider first the case where $T_1\subset w_i$. Since $w_{i+1}\notin W_{T_1},$ $S\cap w_{i+1}\neq S-T_1$, but then $w_{i+1}\subseteq [2n-1]-T_1$ and thus $S\cap w_{i+1}\subseteq S\cap([2n-1]-T_1)$ and we must conclude that $S\cap w_{i+1}\subsetneq S-T_1$. Now there exists $k\in (S-T_1)-(S\cap w_{i+1})$ so that $k\notin w_i$ and $k\notin w_{i+1}$ and thus, the edge label between $w_i$ and $w_{i+1}$ must be $k$. As this element is in $S$, the corresponding edge is not in $O_n(S).$ The case where $S-T_1\subseteq w_i,$ is proved in the same way.
  \end{enumerate}
\end{proof}

\begin{prop}\label{prop:isomin} The $(n-i,n-k+i)-$biregular components of $O_n(k)$ are are all isomorphic. Moreover, when $i\neq \frac{k}{2}$, there are $\binom{k}{i}$ such components, and when $i=\frac{k}{2}$, there are $\frac{1}{2}\binom{k}{i}.$
\end{prop}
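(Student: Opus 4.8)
The plan is to pass to the model $O_n(k)\cong O_n(S)$ with $S=\set{2n-k,\dots,2n-1}$ and to work with the subgraphs $G_T$ induced by $U_T\cup W_T$ for the $i$-element subsets $T\subseteq S$, exactly as in the proof of Proposition \ref{prop:bireg}. The argument has three parts: (i) the $G_T$ are precisely the $(n-i,n-k+i)$-biregular components of $O_n(k)$; (ii) each $G_T$ is connected; (iii) counting the $G_T$. By the symmetry that exchanges $i$ with $k-i$ (equivalently, the roles of $U_T$ and $W_T$) I may assume throughout that $i\le k/2$.

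For (i) I would first note that a vertex $v$ of $O_n$ is incident with exactly one edge of each colour in $[2n-1]-v$ and with no other edges, so its valence in $O_n(S)$ is $n-|S\cap([2n-1]-v)|=n-k+|v\cap S|$. Hence any component of $O_n(k)$ that is $(n-i,n-k+i)$-biregular — or, when $i=k/2$, $(n-k/2)$-regular — can only contain vertices $v$ with $|v\cap S|\in\set{i,k-i}$; each such vertex lies in a unique $G_T$ (with $v\cap S$ equal to $T$ or to $S-T$), and as in the proof of Proposition \ref{prop:bireg} no edge of $O_n(k)$ joins $U_T\cup W_T$ to its complement. Combined with Lemma \ref{lma:disjoint1} this shows that every $(n-i,n-k+i)$-biregular component of $O_n(k)$ coincides with a single $G_T$, provided $G_T$ is connected.

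For (ii), set $X=[2n-1]-S$, so $|X|=2n-1-k$. Decomposing $u\in U_T$ as $T\sqcup u''$ with $u''=u\cap X$ of size $n-1-i$, and $w\in W_T$ as $(S-T)\sqcup w''$ with $w''=w\cap X$ of size $n-1-k+i$, one checks that $u\cap w=u''\cap w''$ and that every edge of $O_n$ between $U_T$ and $W_T$ has its colour in $X$, hence survives in $O_n(S)$. Thus $G_T$ is isomorphic to the bipartite graph $\Gamma$ whose vertex classes are the $(n-1-i)$-subsets and the $(n-1-k+i)$-subsets of $X$, with disjointness as adjacency; since $(n-1-i)+(n-1-k+i)=|X|-1$, if $P,Q$ are $(n-1-i)$-subsets of $X$ differing in a single element, then $X-(P\cup Q)$ is a $(n-1-k+i)$-subset disjoint from both, i.e.\ a common neighbour, so chaining such moves connects the first vertex class, and since every vertex of the second class has a neighbour, $\Gamma$ (hence $G_T$) is connected. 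This argument also shows every $G_T$ is isomorphic to $\Gamma$, whose isomorphism type depends only on $n,k,i$; alternatively, a permutation of $[2n-1]$ fixing $S$ setwise and carrying $T$ to $T'$ induces, by Theorem \ref{thm:knesersymmetric}, an automorphism of $O_n(S)$ carrying $G_T$ to $G_{T'}$.

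For (iii): when $i\ne k/2$, Lemma \ref{lma:disjoint1} gives $G_T\cap G_{T'}=\varnothing$ for distinct $i$-subsets $T,T'\subseteq S$, so the number of $(n-i,n-k+i)$-biregular components is the number of $i$-subsets of $S$, namely $\binom{k}{i}$; when $i=k/2$ we instead have $G_T=G_{S-T}$ with $T\ne S-T$, so $T\mapsto\set{T,S-T}$ is two-to-one onto the components, giving $\tfrac12\binom{k}{i}$. The main obstacle is step (ii): one must be careful that the reduction to the subset graph $\Gamma$ does not silently delete a needed edge, and that connectivity genuinely uses the identity $(n-1-i)+(n-1-k+i)=|X|-1$ rather than a weaker inequality — for the analogous graph whose two subset sizes sum to $|X|$ one would get a perfect matching, which is disconnected.
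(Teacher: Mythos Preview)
Your proof is correct and follows the same overall scaffold as the paper: fix $S=\set{2n-k,\dots,2n-1}$, work with the subgraphs $G_T$ induced by $U_T\cup W_T$, use a permutation of $[2n-1]$ fixing $S$ setwise and carrying $T_1$ to $T_2$ to exhibit the isomorphism (this is exactly what the paper does, invoking Theorem~\ref{thm:knesersymmetric}), and then count $i$-subsets of $S$ with the halving when $i=k/2$.

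Where you diverge from the paper is your step (ii), and this is worth highlighting. The paper's proof never actually verifies that each $G_T$ is connected; it shows the $G_T$ are pairwise isomorphic and pairwise disconnected (via Lemma~\ref{lma:disjoint1}), observes that every vertex of valence $n-i$ lies in some $G_T$, and then simply declares ``there is a component for each subset of $S$ of cardinality $i$'' --- silently assuming $G_T$ itself is a single component. Your argument fills that gap: by stripping off the forced $S$-part of each vertex you identify $G_T$ with the bipartite disjointness graph $\Gamma$ on the $(n-1-i)$-subsets and $(n-1-k+i)$-subsets of $X=[2n-1]-S$, and your observation that these two subset sizes sum to $|X|-1$ gives the common-neighbour step needed for connectedness. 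This reduction also gives you the isomorphism of all $G_T$ for free (they are all isomorphic to the same $\Gamma$), so the permutation argument becomes optional rather than essential. Your closing remark that the analogous graph with subset sizes summing to $|X|$ would be a disconnected perfect matching is exactly the right sanity check --- it shows why the inequality $k>0$ matters.
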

\begin{proof} Let $S=\set{2n-k,2n-k+1,\dots,2n-1}$ so that $O_n(S)$ is isomorphic to $O_n(k)$, and let $T_1$ and $T_2$ be two distinct subsets of $S$ satisfying that $|T_1|=i=|T_2|$ and $T_1\neq S-T_2$. By lemma \ref{lma:disjoint1}, the subgraphs $G_1$ and $G_2$ of $O_n(k)$ determined by $T_1$ and $T_2$ respectively are disjoint and by proposition \ref{prop:bireg} $(n-i,n-k+i)-$biregular. Let $\phi:[2n-1]\to [2n-1]$ be a bijection satisfying that $\phi(S)=S$, $\phi(T_1)=T_2$ and which is the identity on $[2n-1]-S$ (such bijections are easily constructed). We claim that this function induces an isomorphism between $G_1$ and $G_2$. First note that if $v\in U_{T_1}$ then $S\cap v=T_1$ and $v-T_1\subset [2n-1]-S$ and thus $\phi(v)=\phi(v-T_1)\cup \phi(T_1)=(v-T_1)\cup T_2=(v-S)\cup T_2$ and thus $\phi(v)\cap S=T_2,$ and $\phi(v)\in U_{T_2}$. If on the other hand $v\in W_{T_1}$ then $S\cap v=S-T_1$ and thus since $\phi(S)=S$ and $\phi(T_1)=T_2$, we must have $\phi(S-T_1)=S-T_2.$ Furthermore, $v-(S-T_1)\subset [2n-1]-S$ so $\phi(v)=\phi(v-(S-T_1))\cup \phi(S-T_1)=(v-(S-T_1))\cup (S-T_2)=(v-S)\cup (S-T_2)$ giving that $\phi(v)\cap S=S-T_2$ and thus $\phi(v)\in W_{T_2}.$ Since the bijection $\phi$ fixes all elements of $[2n-1]-S$ and its action within $S$ merely switches $T_1$ for $T_2$ and $S-T_1$ for $S-T_2$, it is a bijection from $V(G_1)$ to $V(G_2)$. To show that it is a graph morphism, we take two vertices $v,w\in V(G_1)$ with $v\sim w$. Without loss of generality, assume that $v\in U_{T_1}$ and $w\in W_{T_1}$ then $\phi(v)=(v-S)\cup T_2$ and $\phi(w)=(w-S)\cup (S-T_2).$ Since $v\cap w=\varnothing,$ we can conclude that $(v-S)\cap (w-S)=\varnothing$. We now have: $$\begin{array}{rcl}\phi(v)\cap \phi(w)& = & ((v-S)\cap (w-S))\cup(T_2\cap (w-S))\cup\\ & & ((v-S)\cap(S-T_2))\cup(T_2\cap (S-T_2))\\ & & =\varnothing\end{array},$$ giving the isomorphism.

Since each $i$ block of $S$ produces a distinct subgraph of $O_n(k)$, and since $|S|=k$ we conclude that there are $\binom{k}{i}$ distinct $(n-i,n-k+i)-$biregular subgraphs of $O_n(k).$

Suppose that $v\in V(O_n)$ such that the degree of $v$ in $O_n(S)$ is $n-i$. Since $O_n$ is $n$-regular we can conclude that $v$ was incident with exactly $i$ edges with colors from $S$. If we let $T$ be the subset of $S$ containing these $i$ colors, then we can observe that $v$ contains the subset $S-T$. We conclude that $v$ is a vertex in the subgraph determined by $T$. We may now conclude that there is a component for each subset of $S$ of cardinality $i$, and thus there are $\binom{k}{i}$ such components. 

When $i=\frac{k}{2}$, all such components are counted twice and so we divide the quantity by 2.
\end{proof}

A surprising result of this deconstruction of $O_n$ is that an $(s,t)-$component of $O_n(k)$ is independent of our choices of $n$ and $k$.  

\begin{prop}\label{prop:isomout} The $(s,t)$-biregular component of $O_n(k)$ is isomorphic to the $(s,t)-$biregular component of $O_m(p)$.
\end{prop}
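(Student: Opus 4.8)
The plan is to produce a single model graph $L_{s,t}$, depending only on $s$ and $t$, and to show that \emph{every} $(s,t)$-biregular component of \emph{every} $O_n(k)$ is isomorphic to $L_{s,t}$; transitivity of isomorphism then yields the proposition. For integers $s,t\geq 1$ let $L_{s,t}$ be the bipartite graph whose vertex classes are the $(s-1)$-subsets and the $s$-subsets of $[s+t-1]$, with an edge $A\sim B$ exactly when $A\subset B$. Since an $(s-1)$-subset of an $(s+t-1)$-element set is contained in exactly $t$ of its $s$-subsets and an $s$-subset contains exactly $s$ of its $(s-1)$-subsets, $L_{s,t}$ is $(s,t)$-biregular, and it depends on nothing but $s$ and $t$.

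Now fix $n,k$ so that $O_n(k)$ has an $(s,t)$-biregular component; since being $(s,t)$-biregular is the same property as being $(t,s)$-biregular we may assume $s\geq t$. Replacing $O_n(k)$ by the isomorphic $O_n(S)$ with $S=\set{2n-k,\dots,2n-1}$, recall from Proposition~\ref{prop:bireg} that such a component may be taken to be the subgraph induced by $U_T\cup W_T$ for a subset $T\subset S$ with $|T|=i:=n-s$, where $W_T$ has valence $n-i=s$, $U_T$ has valence $n-k+i=t$, and the ``free'' set $[2n-1]-S$ has $2n-1-k=s+t-1$ elements. The first step is to fix once and for all an identification of $[2n-1]-S$ with $[s+t-1]$ and to define a map on the component by $u\mapsto u-T$ for $u\in U_T$ and $w\mapsto\bigl([2n-1]-S\bigr)-\bigl(w-(S-T)\bigr)$ for $w\in W_T$. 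A direct count of cardinalities (using $|T|=n-s$, $|S-T|=k-i$, $|u|=|w|=n-1$ and $s+t=2n-k$) shows the image of each $u$ is an $(s-1)$-subset of $[s+t-1]$, the image of each $w$ is an $s$-subset, and that the map is a bijection onto $V(L_{s,t})$, its inverse simply re-attaching $T$, respectively $S-T$.

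The only step needing genuine care is verifying that this bijection is an isomorphism of the induced subgraphs, i.e.\ that it both preserves and reflects adjacency. This works because adjacency in $O_n$ is disjointness: writing $u=T\sqcup u'$ and $w=(S-T)\sqcup w'$ with $u',w'\subseteq[2n-1]-S$ and $T\cap(S-T)=\varnothing$, we get $u\cap w=\varnothing$ if and only if $u'\cap w'=\varnothing$, i.e.\ if and only if $u'\subseteq\bigl([2n-1]-S\bigr)-w'$, which under the map is exactly the containment relation defining the edges of $L_{s,t}$. Hence the $(s,t)$-biregular component of $O_n(k)$ is isomorphic to $L_{s,t}$, and since by Proposition~\ref{prop:isomin} all $(s,t)$-biregular components of $O_n(k)$ are mutually isomorphic, ``the'' such component is well defined and equals $L_{s,t}$ up to isomorphism. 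Running the identical argument inside $O_m(p)$ produces the same model $L_{s,t}$, so the two components are isomorphic, as claimed.

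I do not expect a serious obstacle: the argument is essentially the bookkeeping of a handful of cardinalities plus the single structural observation that, after restricting the disjointness relation to the free coordinates $[2n-1]-S$ and complementing on the $W_T$-side, it turns into the containment relation between two consecutive levels of a Boolean lattice on $s+t-1$ points --- a description manifestly independent of $n$ and $k$. The places to be careful are that the vertex map is genuinely onto \emph{both} levels, that non-edges are reflected (so one really gets an isomorphism and not merely a homomorphism), and the degenerate boundary case $t=0$, the single-vertex component of $O_n(n)$, for which $L_{s,0}$ collapses to one vertex. One could alternatively avoid the model graph and argue by induction using the move $(n,k)\mapsto(n+1,k+2)$, which replaces $i$ by $i+1$ and hence fixes both $s=n-i$ and $t=n-k+i$ while keeping $2n-1-k$ constant; for a fixed pair $(s,t)$ the admissible pairs $(n,k)$ form a single chain under this move, so that would suffice as well --- but the model-graph route makes the parameter-independence transparent.
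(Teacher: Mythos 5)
Your proof is correct, but it takes a genuinely different route from the paper's. The paper proves the proposition by writing down a direct bijection between the two components: it observes that $2n-k=s+t=2m-p$ forces $[2n-1]-S_1=[2m-1]-S_2$, and then maps $v\mapsto (v-T_1)\cup T_2$ on $U_{T_1}$ and $v\mapsto (v-(S_1-T_1))\cup(S_2-T_2)$ on $W_{T_1}$, i.e.\ it keeps the ``free'' part of each vertex fixed and swaps one partition of the deleted color set for the other; adjacency is then checked by expanding the intersection $\phi(v)\cap\phi(w)$ into four empty pieces. You instead exhibit a canonical model $L_{s,t}$ (the containment graph between levels $s-1$ and $s$ of the Boolean lattice on $[s+t-1]$) and show each component is isomorphic to it by stripping off $T$ (resp.\ $S-T$) and complementing on the $W_T$ side, turning disjointness into containment. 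Both arguments are sound and rest on the same cardinality bookkeeping; what yours buys is an explicit, parameter-free description of the remainder graph $R_{s}^{\,s-t}$ as two consecutive levels of a hypercube, which makes the independence of $n$ and $k$ transparent and, as a bonus, renders the $s=t$ case of Theorem \ref{thm:evenmidl} essentially immediate, since $L_{s,s}$ is by definition the middle levels graph $B_s$. The paper's version avoids introducing an auxiliary object and keeps everything inside the odd graphs, at the cost of a somewhat longer verification. Your cautionary remarks (surjectivity onto both levels, reflection of non-edges, the degenerate $t=0$ case) identify exactly the right points, and none of them causes trouble.
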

\begin{proof} In order for both graphs to have an $(s,t)-$biregular component, by proposition \ref{prop:isomin} we must have $n-i=s=m-j$ and $n-k+i=t=m-p+j$. Let $S_1=\set{2n-k,2n-k+1,\dots, 2n-1}$ and $S_2=\set{2m-p,2m-p+1,\dots,2m-1}$, so that $O_n(S_1)$ is isomorphic to $O_n(k)$ and $O_m(S_2)$ is isomorphic to $O_m(p)$. Now, since $2n-k=s+t=2m-p$, we must have that $|[2n-1]-S_1|=|[2m-1]-S_2|$, and thus $[2n-1]-
S_1=[2m-1]-S_2$. Pick $T_1\subset S_1$ with cardinality $i$ and pick $T_2\subset S_2$ with cardinality $j$, and let $U_{T_1}$, $U_{T_2}$ be as in ($\ref{ut}$) and $W_{T_1}$, $W_{T_2}$ as in ($\ref{wt}$). Furthermore, let $G_1$ be the subgraph of $O_n(k)$ induced by $U_{T_1}\cup W_{T_1}$ and $G_2$ be the subgraph of $O_m(p)$ induced by $U_{T_2}\cup W_{T_2}$. Define a function $\phi:(U_{T_1}\cup W_{T_1})\to(U_{T_1}\cup W_{T_1})$ by $$\phi(v)=\left\{\begin{array}{ll}(v-T_1)\cup T_2 & v\in U_{T_1}\\ (v-(S_1-T_1))\cup (S_2-T_2) & v\in W_{T_1}\end{array}\right. .$$ We first note that either $|\phi(v)|=(n-1)-i+j=s-1+j=m-j-1+j=m-1$ when $v\in U_{T_1}$ or if $v\in W_{T_i}$, we have $|\phi(v)|=(n-1)-(k-i)+(p-j)=t-1+(p-j)=m-p+j-1+p-j=m-1$ so that the function is well-defined.

To show injectivity, suppose that $\phi(v_1)=\phi(v_2)$ then we have three cases. If $v_1,v_2\in U_{T_1}$, then $(v_1-T_1)\cup T_2=(v_2-T_1)\cup T_2$ so that $v_1-T_1=v_2-T_1$ and hence $v_1=v_2$. If $v_1,v_2\in W_{T_1}$ then $(v_1-(S_1-T_1))\cup (S_2-T_2)=(v_2-(S_1-T_1))\cup (S_2-T_2)$ so that $(v_1-(S_1-T_1))=(v_2-(S_1-T_1))$ and thus $v_1=v_2.$ In the last case, if $v_1\in U_{T_1}$ and $v_2\in W_{T_1}$, then $\phi(v_1)\in U_{T_2}$ and $\phi(v_1)\in W_{T_2}$ which is impossible, giving the result.  

For surjectivity, let $w\in U_{T_2}$, then set $v=(w-T_2)\cup T_1$, then $\phi(v)=(((w-T_2)\cup T_1)-T_1)\cup T_2=(w-T_2)\cup T_2=w$. If $w\in W_{T_2}$ then set $v=(w-(S_2-T_2))\cup (S_1-T_1)$ so that $\phi(v)=(((w-(S_2-T_2))\cup (S_1-T_1))-(S_1-T_1))\cup (S_2-T_2)=(w-(S_2-T_2))\cup (S_2-T_2)=w$, giving the result. 

Now suppose that $v\sim w$ in $O_n(k)$, and without loss of generality, assume that $v\in U_{T_1}$ then $(v-T_1)\cap (w-(S_1-T_1))=\varnothing$ and thus $$\begin{array}{rcl}\phi(v)\cap \phi(w) & = & ((v-T_1)\cup T_2)\cap ((w-(S_1-T_1))\cup (S_2-T_2))\\ & = & ((v-T_1)\cap (w-(S_1-T_1)))\cup((v-T_1)\cap (S_2-T_2))\\ & & \cup (T_2\cap (w-(S_1-T_1)))\cup (T_2\cap (S_2-T_2))\end{array}.$$ Since $v-T_1\subset [2n-1]-S_1=[2m-1]-S_2$ and $w-(S_1-T_1)\subset[2n-1]-S_1=[2m-1]-S_2$ all four intersections are empty, and so the entire union is empty and $\phi(v)\sim \phi(w),$ giving the isomorphism.  
\end{proof}

\begin{thm}\label{thm:evenmidl} When $k>0$ is even, $O_n(k)$ has $\binom{k-1}{\frac{k}{2}-1}$ components which are isomorphic to $B_{(n-k/2)}$ and has no other regular components.  
\end{thm}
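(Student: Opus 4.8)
The plan is to read off the number of regular components from Propositions~\ref{prop:bireg} and~\ref{prop:isomin}, and then to build an explicit isomorphism between one such component and $B_{n-k/2}$. An $(n-i,n-k+i)$-biregular component of $O_n(k)$ is regular exactly when $n-i=n-k+i$, i.e. $i=k/2$; since $k$ is even this is an integer with $1\le k/2\le k$, so by Proposition~\ref{prop:isomin} there are $\tfrac{1}{2}\binom{k}{k/2}$ such components, all mutually isomorphic. Pascal's identity together with the symmetry $\binom{k-1}{k/2}=\binom{k-1}{k/2-1}$ gives $\binom{k}{k/2}=2\binom{k-1}{k/2-1}$, so the count is $\binom{k-1}{k/2-1}$ as claimed. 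For the clause ``no other regular components'', observe that if $v\sim v'$ in $O_n(S)$ with edge color $c=[2n-1]-(v\cup v')$, then $c\notin S$ and $v\sqcup v'=[2n-1]\setminus\{c\}\supseteq S$, so $v\cap S$ and $v'\cap S$ partition $S$; hence every edge of $O_n(S)$ joins two vertices with complementary traces on $S$, and each component lies inside one ``fiber'' $U_T\cup W_T$ (which depends only on the unordered pair $\{T,S\setminus T\}$). When $|T|\neq k/2$ the parts $U_T$ and $W_T$ of such a fiber carry the distinct degrees $n-k+|T|$ and $n-|T|$, so no component of it containing an edge can be regular. (When $k\ge n$ a fiber can also contain isolated vertices; these are trivially $0$-regular but are not isomorphic to any middle levels graph.) Thus the whole statement reduces to showing $G_T:=U_T\cup W_T\cong B_{n-k/2}$ when $|T|=k/2$.

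Put $m:=n-k/2$, so $B_{n-k/2}=B_{2m-1,m-1}$, and let $R:=[2n-1]\setminus S$; then $|R|=2n-1-k=2m-1$, so fix an identification $R\cong[2m-1]$. Fix $T\subset S$ with $|T|=k/2$. Every $u\in U_T$ splits as the disjoint union $u=T\sqcup(u\cap R)$ with $|u\cap R|=(n-1)-k/2=m-1$, and every $w\in W_T$ splits as $w=(S\setminus T)\sqcup(w\cap R)$ with $|w\cap R|=m-1$, hence $|R\setminus w|=m$. Define $\psi\colon V(G_T)\to V(B_{2m-1,m-1})$ by
\[
\psi(v)=\begin{cases} u\cap R=u\setminus S, & v=u\in U_T,\\[1mm] R\setminus w, & v=w\in W_T.\end{cases}
\]
Then $\psi$ maps $U_T$ bijectively onto the $(m-1)$-subsets of $R\cong[2m-1]$ (inverse $a\mapsto T\cup a$) and $W_T$ bijectively onto the $m$-subsets of $R$ (inverse $b\mapsto(S\setminus T)\cup(R\setminus b)$), so $\psi$ is a bijection onto $V(B_{n-k/2})$.

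Now check that $\psi$ preserves and reflects adjacency. Every edge of $G_T$ runs between some $u\in U_T$ and $w\in W_T$, and $u\sim w$ in $O_n$ iff $u\cap w=\varnothing$. Because $T$, $S\setminus T$ and subsets of $R$ are pairwise disjoint except for the two parts lying inside $R$, intersecting $u=T\cup\psi(u)$ with $w=(S\setminus T)\cup(R\setminus\psi(w))$ gives $u\cap w=\psi(u)\cap(R\setminus\psi(w))$; so $u\cap w=\varnothing$ iff $\psi(u)\subseteq\psi(w)$, and as $|\psi(u)|=m-1<m=|\psi(w)|$ this is exactly adjacency of the $(m-1)$-set $\psi(u)$ with the $m$-set $\psi(w)$ in $B_{2m-1,m-1}$. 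Furthermore, when $u\cap w=\varnothing$ we have $u\cup w=S\cup\psi(u)\cup(R\setminus\psi(w))$, so the color $[2n-1]-(u\cup w)$ is the unique element of $\psi(w)\setminus\psi(u)\subseteq R$, which is never in $S$; hence such an edge always survives in $O_n(S)$. Therefore $u\sim w$ in $O_n(S)$ iff $\psi(u)\sim\psi(w)$ in $B_{n-k/2}$, so $\psi\colon G_T\to B_{n-k/2}$ is an isomorphism. Since $G_T$ is a union of components of $O_n(S)$ and the middle levels graph $B_{n-k/2}$ is connected, $G_T$ is a single component; as the partitions $\{T,S\setminus T\}$ of $S$ into two $(k/2)$-subsets vary over all $\tfrac{1}{2}\binom{k}{k/2}=\binom{k-1}{k/2-1}$ possibilities, we obtain exactly the claimed components.

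The step I expect to need the most care in the writeup is the adjacency check, and in particular the observation that disjointness of $u$ and $w$ \emph{automatically} forces the edge color into $R=[2n-1]\setminus S$, so that the single condition $\psi(u)\subset\psi(w)$ already encodes adjacency in $O_n(S)$ with no extra constraint --- this is precisely where the balance $|T|=|S\setminus T|=k/2$ (making both sides of $G_T$ trace onto $(m-1)$-subsets of $R$) is used. Everything else is bookkeeping against the constructions already in place in Propositions~\ref{prop:bireg}--\ref{prop:isomout}.
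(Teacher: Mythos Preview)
Your argument is correct and the isomorphism $\psi$ you write down is, after unwinding, exactly the paper's map $\phi$; the difference is structural rather than mathematical. The paper first invokes Proposition~\ref{prop:isomout} to reduce to the base case $O_{m+1}(2)$ (with $m=n-k/2$), and only there builds the explicit bijection to $B_m$. You instead stay in $O_n(S)$ for arbitrary even $|S|=k$ and build the bijection directly over $R=[2n-1]\setminus S$, so Proposition~\ref{prop:isomout} is never used. What you gain is a self-contained argument and an explicit two-sided adjacency check (you verify that disjointness of $u,w$ forces the edge color into $R$, hence the edge survives in $O_n(S)$), whereas the paper tacitly leans on the regularity of both sides to close the isomorphism. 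What the paper's route buys is economy: once Proposition~\ref{prop:isomout} is in hand, only the case $S=\{2m,2m+1\}$ needs to be written out, and all later corollaries (e.g.\ for $B_n(k)$) fall through the same reduction. Your explicit fiber argument for ``no other regular components'' is also a nice addition; the paper leaves this to the combination of Propositions~\ref{prop:bireg} and~\ref{prop:isomin}.
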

\begin{proof} Proposition \ref{prop:isomout} tells us that the $m-$regular components of $O_n(k)$ are isomorphic without regard for $n$ or $k$, so to prove that any $m-$regular component is isomorphic to $B_m$ it suffices to work in $O_{(m+1)}(2)$ and reason from there. In this case, we will let $S=\set{2m,2m+1}$ and $T=\set{2m}$, then let $U_T$ be as in (\ref{ut}) and $W_T$ be as in (\ref{wt}) and define a function $\phi:(U_T\cup W_T)\to B_n$ by $$\phi(v)=\left\{\begin{array}{ll}v-\set{2m} & v\in U_T\\ \textrm{$[2m-1]$}-(v-\set{2m+1}) & v\in W_T\end{array}\right.$$ Then we see that if $v\in U_T$, $|\phi(v)|=m-1$ and when $v\in W_T,$ $|\phi(v)|=(2m-1)-(m-1)=m$. This tells us that if $\phi(v)=\phi(w)$ then either $v,w\in U_T$ or $v,w\in W_T.$ In the first case, $v-\set{2m}=w-\set{2m}$ and thus $v=w$. In the second case, $[2m-1]-(v-\set{2m+1})=[2m-1]-(w-\set{2m+1})$ giving that $v-\set{2m+1}=w-\set{2m+1}$ and injectivity. To see surjectivity, let $w\in V(B_m)$. If $|w|=m-1$, set $v=w\cup\set{2m}$. If $|w|=m$, set $v=([2m-1]-w)\cup\set{2m+1}$ in either case, $\phi(v)=w.$ Finally, let $u\in U_T$ and $v\in W_T$ with $u\sim v$, then $u\cap v=\varnothing$, then $(u-\set{2m})\cap (v-\set{2m+1})=\varnothing$ but since $u-\set{2m}\subset[2m-1],$ and $v-\set{2m+1}\subset[2m-1]$, $u-\set{2m}\subset ([2m-1]-(v-\set{2m+1})$ which implies that $\phi(u)\subset \phi(v)$ and thus $\phi(u)\sim\phi(v)$, giving the isomorphism. Then by proposition \ref{prop:bireg} when $k>0$ is even, $O_n(k)$ has an $(n-k/2)$-regular component which by proposition \ref{prop:isomout} we now know is isomorphic to $B_{(n-k/2)}$. Furthermore, by proposition \ref{prop:isomin} there are $$\frac{1}{2}\binom{k}{k/2}=\frac{1}{2}\left(\frac{k!}{(k/2)!(k/2)!}\right)=\frac{1}{2}\left(\frac{2(k!)}{k(k/2)!(k/2-1)!}\right)=\binom{k-1}{\frac{k}{2}-1}$$ components of $O_n(k)$ which are isomorphic to $B_{(n-k/2)}.$
\end{proof}

\begin{cor} When $k>0$ is even, $B_n(k)$ has $2\binom{k-1}{\frac{k}{2}-1}$ components which are isomorphic to $B_{(n-k/2)}$ and has no other regular components.
\end{cor}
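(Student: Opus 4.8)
The plan is to mirror the proof of Theorem~\ref{thm:evenmidl}, but to work directly inside $B_n$ and keep track of the one feature that distinguishes the middle-levels setting from the odd-graph setting and produces the extra factor of $2$. Fix $S=\set{2n-k,\dots,2n-1}$, so that $B_n(k)\cong B_n(S)$; recall that a vertex of $B_n$ is an $(n-1)$- or $n$-subset of $[2n-1]$ and that the edge joining $u\subset v$ carries the color $v\setminus u$. The first step is the analogue of Lemma~\ref{lma:disjoint1}: if $v\sim w$ is an edge of $B_n(S)$, then $v\cap S=w\cap S$ (taking $v\subset w$ with color $c=w\setminus v\notin S$, we get $w\cap S=(v\cup\set c)\cap S=v\cap S$). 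Hence every connected component of $B_n(S)$ lies inside a single set $V_T:=\set{v\in V(B_n)\mid v\cap S=T}$ with $T\subseteq S$. This is exactly where the two settings diverge: in $O_n$ crossing an edge of color $c\notin S$ sends $u\cap S$ to $S\setminus(u\cap S)$, so an odd-graph component meets both $V_T$ and $V_{S\setminus T}$, whereas in $B_n$ it meets only $V_T$.

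Next I would identify the subgraph $G_T$ of $B_n(S)$ induced on $V_T$. Every edge of $B_n$ joining two vertices of $V_T$ has its color in $[2n-1]\setminus S$, hence survives in $B_n(S)$, so $G_T$ is the full induced subgraph, and the bijection $v\mapsto v\setminus S$ is a graph isomorphism from $G_T$ onto the bipartite Kneser graph $B_{2n-1-k,\,n-1-|T|}$ on the $(2n-1-k)$-element ground set $[2n-1]\setminus S$. When $|T|=k/2$, writing $m=n-k/2$ this graph is $B_{2m-1,m-1}=B_m=B_{n-k/2}$; in particular $G_T$ is connected, so it is a single component of $B_n(S)$, and it is $(n-k/2)$-regular. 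Conversely, a size-$(n-1)$ vertex of $V_T$ has degree $n-k+|T|$ in $B_n(S)$ and a size-$n$ vertex has degree $n-|T|$, so (apart from the two single-vertex components that occur only when $k=n$, which are isomorphic to no $B_m$) a component is regular precisely when all of its vertices $v$ satisfy $|v\cap S|=k/2$, i.e. precisely the components $G_T$ with $|T|=k/2$.

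To finish, I would count. The sets $V_T$ are pairwise disjoint, so the $G_T$ with $|T|=k/2$ are genuinely distinct components; and, in contrast with Theorem~\ref{thm:evenmidl}, $V_T$ and $V_{S\setminus T}$ are disjoint rather than interchanged, so no identification is performed. Thus $B_n(S)$ has exactly $\binom{k}{k/2}$ regular components, each isomorphic to $B_{n-k/2}$, and $\binom{k}{k/2}=2\binom{k-1}{k/2-1}$ by the same binomial manipulation used in the proof of Theorem~\ref{thm:evenmidl}. One can also phrase this through Corollary~\ref{cor:middubodd}: the covering $\phi\colon B_n\to O_n$ preserves edge colors, hence restricts to a double cover $B_n(S)\to O_n(S)$, and the preimage of a regular component $C\cong B_{n-k/2}$ of $O_n(S)$ — on which $v\cap S\in\set{T,\,S\setminus T}$ — is $G_T\sqcup G_{S\setminus T}$, the bipartite double cover of the bipartite graph $C$, which is necessarily disconnected into two copies of $C$.

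I do not expect a genuinely hard step here: everything follows once one observes that $v\cap S$ is an invariant of components of $B_n(S)$. The only point requiring care is to \emph{not} re-use the $T\leftrightarrow S\setminus T$ identification that was correct for $O_n$, since dropping it is exactly what turns $\binom{k-1}{k/2-1}$ into $2\binom{k-1}{k/2-1}$; a secondary bookkeeping point is the degenerate $k=n$ single-vertex components, which should be noted but excluded just as in the statement of Theorem~\ref{thm:evenmidl}.
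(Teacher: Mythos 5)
Your proof is correct, but it takes a genuinely different route from the paper's. The paper deduces the corollary from Theorem \ref{thm:evenmidl} by viewing $B_n$ as the regular component of $O_{n+1}(2)$, so that regular components of $B_n(k)$ are regular components of $O_{n+1}(k+2)$; the count $2\binom{k-1}{k/2-1}$ then comes from counting partitions of the $(k+2)$-set $S\cup\set{2n,2n+1}$ into two $(k/2+1)$-blocks that separate $2n$ from $2n+1$. You instead argue intrinsically in $B_n$: the observation that $v\cap S$ is constant on components of $B_n(S)$ (crossing an edge of color $c\notin S$ fixes $v\cap S$, whereas in $O_n$ it replaces $v\cap S$ by $S\setminus(v\cap S)$) yields the decomposition into the induced subgraphs $G_T$, of which exactly those with $|T|=k/2$ are regular and isomorphic to $B_{n-k/2}$, and no $T\leftrightarrow S\setminus T$ identification occurs --- which is precisely the structural source of the factor $2$. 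Your version is self-contained (it essentially redevelops the middle-levels analogues of Proposition \ref{prop:bireg} and Lemma \ref{lma:disjoint1}, which turn out to be simpler in the bipartite setting) and explains the doubling conceptually; the paper's version is shorter given that Theorem \ref{thm:evenmidl} is already available, and your closing double-cover remark recovers it via Corollary \ref{cor:middubodd}. One minor imprecision: the identification of $G_T$ with $B_{2n-1-k,\,n-1-|T|}$ under $v\mapsto v\setminus S$ is literally correct only when $|T|=k/2$, since for other $T$ the two levels have sizes $n-1-|T|$ and $n-|T|$, which do not sum to $2n-1-k$; as you only invoke that isomorphism in the case $|T|=k/2$ and compute the degrees separately, this does not affect the argument.
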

\begin{proof} By theorem \ref{thm:evenmidl} $O_{n+1}(2)$ has one connected component isomorphic to $B_n$. Thus all of the regular components of $B_n(k)$ are also regular components of $O_{n+1}(k+2)$ and are thus isomorphic to $B_{(n-k/2)}.$ Furthermore, by proposition \ref{prop:isomin} each of these corresponds to a partition of the set $S=\set{2n-k,2n-k+1,\dots,2n-1,2n,2n+1}$ into two sets of size $k/2+1$. We now note that if we follow the construction in the proof of \ref{thm:evenmidl}, every vertex of $B_n$ has as subset either $\set{2n}$ or $\set{2n+1}$, but no vertex contains the subset $\set{2n,2n+1}$. Thus, in order to determine how many copies of $B_{(n-k/2)}$ exist as subgraphs of $B_n\subseteq O_{n+1}(2)$ we need only count those partitions of $S$ for which $\set{2n,2n+1}$ is also partitioned into two sets of size 1. This is necessarily twice the number of partitions of $S-\set{2n,2n+1}$ into two sets of size $k/2$, as once we pick a partition, we can add either $2n$ or $2n+1$ to either set. This gives the result.
\end{proof}

It is worth noting that the number of components of $O_n(k)$ (respectively $B_n(k)$) which are isomorphic to $B_{(n-k/2)}$ agrees with the number of vertices of $O_{(k/2)}$ (resp. $B_{(k/2)}$). This is not a coincidence, and as we will see in the following section, there is a natural way to identify these components with the vertices of $O_{(k/2)}$. Additionally there is a natural (but nonunique) way to describe the adjacency of these middle levels components in $O_n$ (resp. $B_n$) and this adjacency translates to incidence of vertices in $O_{(k/2)}$ (resp. $B_{(k/2)}$). We will save this discussion for the next section. The following corollary finishes our justification that the remainder graph is independent of whether we are working in an odd graph or a middle levels graph.

\begin{cor} If $C_1$ is an $(s,t)$-biregular component of $O_n(k)$ and $C_2$ is an $(s,t)-$biregular component of $B_m(p)$, then $C_1$ is isomorphic to $C_2$
\end{cor}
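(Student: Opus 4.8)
The plan is to reduce this mixed statement to the purely odd-graph case already settled by Proposition~\ref{prop:isomout}, exploiting the fact (Theorem~\ref{thm:evenmidl} with $k=2$, equivalently Corollary~\ref{cor:middubodd}) that $B_m$ occurs as a single connected component of $O_{m+1}(2)$. First I would pin down how the edge colorings correspond under this embedding. In the proof of Theorem~\ref{thm:evenmidl}, with $S=\set{2m,2m+1}$ and $T=\set{2m}$, the map $\phi$ identifies the component of $O_{m+1}(S)$ induced by $U_T\cup W_T$ with $B_m$. I would verify that $\phi$ is color-preserving: for an edge $(u,v)$ of that component with $u\in U_T$ and $v\in W_T$, a short set computation shows that the $O_{m+1}$-color $[2m+1]-(u\cup v)$ of $(u,v)$ coincides with the $B_m$-color $\phi(u)\vartriangle\phi(v)$ of its image, both being the unique element of $[2m-1]$ missing from $(u-\set{2m})\cup(v-\set{2m+1})$.

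With this bookkeeping in place, for any $P\subseteq[2m-1]$ with $|P|=p$, deleting the colors of $P$ from $B_m$ corresponds under $\phi$ exactly to deleting those same colors from the $B_m$-component of $O_{m+1}(S)$; since that component is a full connected component of $O_{m+1}(2)$, removing the additional colors only deletes edges internal to it. Hence $B_m(p)$ is isomorphic to the disjoint union of those components of $O_{m+1}(p+2)$ that lie inside the $B_m$-component. In particular, if $C_2$ is an $(s,t)$-biregular component of $B_m(p)$, then (up to isomorphism) $C_2$ is an $(s,t)$-biregular component of $O_{m+1}(p+2)$.

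The argument then closes quickly. Since $C_1$ is by hypothesis an $(s,t)$-biregular component of $O_n(k)$, both $O_n(k)$ and $O_{m+1}(p+2)$ possess $(s,t)$-biregular components, so Proposition~\ref{prop:isomout} applies and the $(s,t)$-biregular component of $O_n(k)$ is isomorphic to that of $O_{m+1}(p+2)$. By Proposition~\ref{prop:isomin}, $C_1$ is isomorphic to such a component of $O_n(k)$ and $C_2$ to such a component of $O_{m+1}(p+2)$, whence $C_1\cong C_2$.

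The main obstacle is the middle step: confirming that the embedding $B_m\hookrightarrow O_{m+1}$ of Theorem~\ref{thm:evenmidl} respects edge colors, so that $B_m(p)$ genuinely sits inside $O_{m+1}(p+2)$ as a union of components. This is precisely the device already used in the proof of the preceding corollary about $B_n(k)$; here it is simply applied to arbitrary biregular components rather than only to the regular ones. Once that color check is done, the remainder is an immediate invocation of Propositions~\ref{prop:isomout} and~\ref{prop:isomin}.
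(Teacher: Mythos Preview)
Your proof is correct and follows essentially the same route as the paper's: embed $B_m$ as the unique regular component of $O_{m+1}(2)$ via Theorem~\ref{thm:evenmidl}, so that an $(s,t)$-biregular component of $B_m(p)$ becomes an $(s,t)$-biregular component of $O_{m+1}(p+2)$, and then invoke Proposition~\ref{prop:isomout}. The paper's version is terser---it does not explicitly verify the color-preservation of the embedding or cite Proposition~\ref{prop:isomin}---so your added care on those points is a genuine improvement in rigor rather than a different argument.
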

\begin{proof} Note that since $B_m$ is a regular component of $O_{(m+1)}(2)$ (by theorem \ref{thm:evenmidl}), an $(s,t)-$biregular component of $B_m(p)$ is an $(s,t)$-biregular component of $O_{(m+1)}(p+2)$ and thus by proposition \ref{prop:isomout} we are done.
\end{proof}

\section{Connections among the middle levels components of $O_n(k),B_n(k)$}
As mentioned in the discussion above, the number of middle levels components of $O_n(k)$ (respectively $B_n(k)$) always coincides with the size of $O_{(k/2)}$ (resp. $B_{(k/2)})$. As demonstrated in the proof of theorem \ref{thm:evenmidl}, each of the middle levels components of $O_n(k)$ is determined by a partition of the set $S=\set{2n-k,2n-k+1,\dots, 2n-1}$ into two subsets of size $k/2$, and each such partition is completely determined by a set of size $k/2$. Moreover, we can assign to each copy of $B_{(n-k/2)}$ the $k/2-$subset of $S$ which contains $2n-1$. Thus each copy of $B_{(n-k/2)}$ has a natural identification with a $(k/2-1)$-subset of the $(k-1)-$set $\set{2n-k,2n-k+1,\dots,2n-2},$ and this relationship gives the bijection between the middle-levels components and the vertices of $O_{(k/2)}$. While the selection of those sets which contain $2n-1$ is a natural choice, it is by no means the only choice we could have made. Instead it is clear that any other choice of `distinctive' element would have yielded a similar bijection. Additionally, any of the proofs in the previous section could have been made with a $k-$set different from $S$, and the results would have remained the same (although the isomorphisms defined in the proofs would have been more complicated).

The fact that the number of middle levels components agrees with either the size of an odd graph or the size of a middle levels graph inclines us to consider the question of whether or not an odd graph structure may be naturally imposed onto this collection of middle levels components. The answer to this question is affirmative, but the description of said structure takes a bit of work and involves involutions from the symmetric group $S_{2n-1}$. 

As mentioned above, the middle levels components of $O_n(k)$ are in one-to-one correspondence with the $(k/2-1)-$blocks of the $(k-1)-$set, $S-\set{2n-1}$. Let us now suppose that we have two such $(k/2-1)-$blocks, $T_1,T_2\subset S-\set{2n-1}$ and furthermore that $T_1\cap T_2=\varnothing.$ Let $a$ be the unique element of $S-\set{2n-1}$ which is in neither $T_1$ nor $T_2$. We will then see that the middle levels graph corresponding to $T_2$ can be achieved by applying the involution $(a,2n-1)$ to the middle levels graph corresponding to $T_1$. This algebraic relationship does not speak to the geometric relationship between these two components as subgraphs of $O_n$. These geometric connections are achieved through a collection of length two paths (all of them with ordered labels either $\langle a,2n-1\rangle$ or $\langle 2n-1,a\rangle$) from the vertices of one component to the vertices of the other. We can see this connection explicitly in the following result from Biggs \cite{Biggs}.

\begin{prop}[Biggs, 1979]\label{prop:twocolor} Let $v$ and $w$ be distinct vertices of $O_n$ for some positive integer $n$ and let $(a,b)\in S_{2n-1}$, then $w=(a,b)v$ if and only if there is a path with ordered labels either $\langle a,b\rangle$ or $\langle b,a\rangle$ from $v$ to $w$.
\end{prop}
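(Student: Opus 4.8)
The plan is to exploit the explicit edge‑labelling of $O_n$ to convert ``walk of length two with prescribed ordered labels'' into a short computation with set operations. First I would record the basic fact that does essentially all the work: the vertices of $O_n$ are the $(n-1)$‑subsets of $[2n-1]$, and $u\sim u'$ iff $u\cap u'=\varnothing$, which (since $|u|=|u'|=n-1$) forces $|u\cup u'|=2n-2$, so the label of $(u,u')$ is the unique element of $[2n-1]-(u\cup u')$. Rephrased as a ``neighbor‑along‑color‑$c$'' formula: the vertex $u$ is incident to an edge of color $c$ iff $c\notin u$, and in that case the other endpoint is uniquely $u'=[2n-1]-(u\cup\set c)$, equivalently $\overline{u'}=u\cup\set c$, where $\overline x=[2n-1]-x$.

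For the forward direction I would assume $w=(a,b)v$ with $w\neq v$. A transposition fixes $v$ unless exactly one of $a,b$ lies in $v$ (and in particular $a\neq b$ here), so after possibly renaming I may assume $b\in v$ and $a\notin v$, giving $w=(v-\set b)\cup\set a$. I would then exhibit the middle vertex explicitly as $x:=[2n-1]-(v\cup\set a)$, check that $|x|=n-1$ and $x\cap v=\varnothing$ so that $v\sim x$ with label $a$, and then use $\overline x=v\cup\set a$ to verify $x\cap w=\varnothing$ and that the label of $(x,w)$ is $b$. The path $\langle v,x,w\rangle$ then has ordered labels $\langle a,b\rangle$, or $\langle b,a\rangle$ if the renaming step was invoked.

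For the converse I would start from a path $\langle v,x,w\rangle$ with ordered labels $\langle a,b\rangle$ (the $\langle b,a\rangle$ case being identical after swapping the roles of $a$ and $b$) and apply the opening formula twice: $x=[2n-1]-(v\cup\set a)$ (so $a\notin v$) and $w=[2n-1]-(x\cup\set b)$ (so $b\notin x$). Taking complements yields $\overline x=v\cup\set a$, and hence $w=(v\cup\set a)-\set b$. Comparing cardinalities ($|w|=n-1$ while $|v\cup\set a|=n$) forces $b\in v\cup\set a$, and $b=a$ is impossible since it would give $w=(v\cup\set a)-\set a=v$, contradicting $v\neq w$; therefore $b\in v$ and $w=(v-\set b)\cup\set a=(a,b)v$.

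I expect no serious obstacle here: once the neighbor‑along‑color‑$c$ formula is in hand, the proposition reduces to careful bookkeeping with complements. The one place to be slightly careful is the case reduction in the forward direction --- arguing that only the ``exactly one of $a,b$ in $v$'' case can occur when $v\neq w$, tracking which of the two ordered patterns $\langle a,b\rangle$, $\langle b,a\rangle$ one lands in, and observing that $v\neq w$ also excludes the degenerate case $a=b$.
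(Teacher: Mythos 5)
Your proof is correct and rests on the same core idea as the paper's: using the neighbor-along-color-$c$ formula $u'=[2n-1]-(u\cup\set{c})$ to pin down the unique middle vertex of the length-two path explicitly. In fact your write-up is slightly more complete than the paper's, since the published proof only carries out the forward implication ($w=(a,b)v$ implies the path exists) and leaves the converse unargued, whereas you verify it by reversing the computation and ruling out the degenerate cases $b=a$ and $b\notin v\cup\set{a}$.
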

\begin{proof} Suppose first that $w=(a,b)v$ and note that if $\set{a,b}\subset v$ or $\set{a,b}\cap v=\varnothing,$ then $w=(a,b)v=v$ which is a contradiction, thus we will assume without loss of generality that $a\in v$ but $b\notin v$. Now let $v=\set{a,a_1,a_2,\dots,a_{n-2}}$ then $w=(a,b)v=\set{b,a_1,a_2,\dots,a_{n-2}}$. Now $v_1$ is adjacent to $v$ across an edge labeled by $b$ if and only if $v_1=[2n-1]-(v\cup\set{b})$. Note that $a\notin v_1$ so $v_1$ is adjacent via an edge labeled by $a$ to the vertex $(v\cup\set{b})-\set{a}=w$, giving the claim.
\end{proof}

We now make this construction explicit. In all of our discussion thus far it has been notationally simpler to consider the set $S$ of the graph $O_n(S)$ to be the set $\set{2n-k,\dots,2n-1}$, but as we have demonstrated, this graph is isomorphic to the graph $O_n([k])$, and this particular representation ends up giving us a more natural isomorphism in our deconstruction. In general, define $MO_n(k)$ to be the middle levels components of $O_n([k])$ $$MO_n(k):=\set{\textrm{middle levels components of $O_n([k])$}}$$ and for any pair of elements $v,w\in MO_n(k)$ we will say that $v$ and $w$ are adjacent if there exists an involution $(a,k)\in S_{k}$ so that $w=(a,k)v$.  

\begin{thm}\label{thm:oddsuperstructure} Let $k$ be an even integer, and define a graph $\Mcal_n(k)$ to be the graph with vertex set $MO_n(k)$ and edge set defined by the adjaceny relation above. Then $\Mcal_n(k)$ is isomorphic to $O_{k/2}$.
\end{thm}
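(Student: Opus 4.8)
The plan is to exhibit an explicit bijection between $MO_n(k)$ and $V(O_{k/2})$ and then check that it carries the adjacency relation on $MO_n(k)$ to the edge relation of $O_{k/2}$ in both directions. As observed in the discussion preceding the theorem, each middle levels component of $O_n([k])$ is determined by a partition of $[k]$ into two $(k/2)$-blocks, and fixing the distinguished element $k$, each such component corresponds uniquely to the $(k/2)$-block of $[k]$ that contains $k$; equivalently, to a $(k/2-1)$-block of $[k-1]$. Since $V(O_{k/2}) = V(K_{k-1,k/2-1})$ is exactly the collection of $(k/2-1)$-blocks of $[k-1]$, this gives a natural bijection $\Psi \colon MO_n(k) \to V(O_{k/2})$: if the component $C$ corresponds to the partition $\{P, Q\}$ of $[k]$ with $k \in P$, set $\Psi(C) = P \setminus \{k\} \subseteq [k-1]$. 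First I would verify carefully that $\Psi$ is well-defined and bijective; this is essentially bookkeeping already done in the paragraph before Theorem~\ref{thm:oddsuperstructure}, together with Proposition~\ref{prop:isomin}, which guarantees the components are in one-to-one correspondence with these partitions.

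Next I would translate the adjacency relation through $\Psi$. Two components $C_1, C_2 \in MO_n(k)$ are adjacent when there is a transposition $(a,k) \in S_k$ with $C_2 = (a,k)\,C_1$, and by Proposition~\ref{prop:twocolor} (applied inside $O_n$) this transposition moves each vertex of $C_1$ to a vertex of $C_2$ via a length-two path with labels $\langle a,k\rangle$ or $\langle k,a\rangle$; in particular it permutes the set $\{P,Q\}$ of the partition associated to $C_1$. The key computation is to see what $(a,k)$ does to the partition $\{P,Q\}$ with $k \in P$. If $a \in Q$, then applying $(a,k)$ swaps $a$ and $k$, sending $P \mapsto (P \setminus \{k\}) \cup \{a\}$ and $Q \mapsto (Q \setminus \{a\}) \cup \{k\}$, so the new distinguished block is $Q' = (Q \setminus \{a\}) \cup \{k\}$ and $\Psi(C_2) = Q \setminus \{a\}$. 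If instead $a \in P \setminus \{k\}$, then $(a,k)$ fixes the set $P$ (it just permutes two of its elements) and fixes $Q$, so $C_2 = C_1$ — not a valid adjacency. Hence $C_1 \sim C_2$ in $\Mcal_n(k)$ precisely when there is some $a \in Q = [k] \setminus P$ with $\Psi(C_2) = Q \setminus \{a\} = ([k-1] \setminus \Psi(C_1)) \setminus \{a\}$, i.e. precisely when $\Psi(C_1)$ and $\Psi(C_2)$ are disjoint $(k/2-1)$-blocks of $[k-1]$ whose union is $[k-1]\setminus\{a\}$. Since $|\Psi(C_1)| + |\Psi(C_2)| = 2(k/2-1) = k-2 = |[k-1]| - 1$, disjointness of the two blocks is equivalent to their union having size $k-2$, which is automatic as soon as they are disjoint; so the condition is simply $\Psi(C_1) \cap \Psi(C_2) = \varnothing$, which is exactly adjacency in the Kneser graph $K_{k-1,k/2-1} = O_{k/2}$.

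Putting these together: $\Psi$ is a bijection $V(\Mcal_n(k)) \to V(O_{k/2})$, and $C_1 \sim C_2$ in $\Mcal_n(k)$ iff $\Psi(C_1) \cap \Psi(C_2) = \varnothing$ iff $\Psi(C_1) \sim \Psi(C_2)$ in $O_{k/2}$, so $\Psi$ is a graph isomorphism. I would also note for completeness that the adjacency relation on $MO_n(k)$ is well-defined in the sense that it does not depend on a choice of representative vertex — this follows from Proposition~\ref{prop:twocolor}, since $w = (a,k)v$ is an equivalence that holds simultaneously for every vertex of a component once it holds for one — and that the transposition $(a,k)$ genuinely acts as an automorphism of $O_n$ carrying one middle levels component onto another, which is Theorem~\ref{thm:knesersymmetric} combined with Proposition~\ref{prop:isomin}.

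The main obstacle, I expect, is not any single hard computation but rather pinning down precisely how a transposition $(a,k)$ acts on the \emph{component} rather than on a single vertex, and confirming that the induced map on partitions is exactly the ``swap $a$ with $k$'' operation described above regardless of which vertex of the component one starts from — in other words, making rigorous the claim that the adjacency relation is genuinely a relation on $MO_n(k)$ and that $\Psi$ intertwines it with Kneser adjacency. Once the orbit structure of $(a,k)$ on the components is understood via Proposition~\ref{prop:twocolor}, the counting identity $2(k/2-1) = (k-1)-1$ makes the equivalence with Kneser adjacency fall out cleanly.
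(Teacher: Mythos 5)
Your proposal is correct and follows essentially the same route as the paper: the same bijection sending a middle levels component to the $(k/2-1)$-block of $[k-1]$ lying in the same part of the determining partition as $k$, and the same computation showing that the transposition $(a,k)$ (with $a$ outside the distinguished block) carries the partition to one whose image under the bijection is disjoint from the original, matching Kneser adjacency in $O_{k/2}$. Your explicit verification of the converse direction (disjoint blocks come from some $(a,k)$) is a small added completeness beyond the paper's one-directional check, but the argument is otherwise the same.
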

\begin{proof} Let $m\in V(\Mcal_n(k))=MO_n(k)$, and let $v$ be a vertex of $m$ satisfying that $k\in v$. Now define $\phi:\Mcal_n(k)\to O_{k/2}$ by $\phi(m)=v\cap[k-1]$ By our discussion in proposition \ref{prop:isomin}, $m$ is determined completely by a partition of $[k]$ into two $k/2-$sets, $T$ and $[k]-T$, thus the function is well-defined, and $\phi(m)$ returns the $k/2-1$ elements which are in the same subset of this partition as $k$. To see that this is an injection, let $m_1,m_2\in V(\Mcal_n(k)).$ Then $m_1$ is determined by the sets $\phi(m_1)\cup \set{k}$ and $[k-1]-\phi(m_1)$ and $m_2$ is determined by the sets $\phi(m_2)\cup \set{k}$ and $[k-1]-\phi(m_2)$. If $\phi(m_1)=\phi(m_2)$, then $m_1$ and $m_2$ are determined by the same partition of $[k]$ and are hence the same middle levels graphs. Now, pick any vertex $v\in V(O_{k/2})$, then $v$ is a $(k/2-1)-$subset of $[k-1]$, and there is a middle levels component $m$ of $O_n([k])$ which is determined by the sets $v\cup\set{k}$ and $[k-1]-v$. Let $w$ be a vertex of $m$ which contains $k$, then $\phi(w)=w\cap [k-1]=v.$

Let $m_1,m_2\in V(\Mcal_n(k))$ with $m_1\sim m_2$, then there exists $a\in [k-1]$ so that $m_2=(a,k)m_1$. If $a\in \phi(m_1)$ then $(a,k)(\phi(m_1)\cup\set{k})=\phi(m_1)\cup\set{k}$ and thus $m_2$ is determined by the same $(k/2)-$subdivision of $[k]$ as $m_1$ and thus the two are the same, which is a contradiction. Since $a\notin \phi(m_1)$ we must instead have $(a,k)(\phi(m_1)\cup\set{k})=\phi(m_1)\cup\set{a}$ and thus $m_2$ is determined by the sets $\phi(m_1)\cup\set{a}$ and $[k]-(\phi(m_1)\cup\set{a})$, with $k\in [k]-(\phi(m_1)\cup\set{a})$. Now any vertex $v$ of $m_2$ which contains $k$ must contain the subset $[k]-(\phi(m_1)\cup\set{a})$ and we must have $\phi(m_2)=[k-1]-(\phi(m_1)\cup\set{a})$ as such $\phi(m_1)\cap\phi(m_2)=\varnothing$ and hence $\phi(m_1)\sim\phi(m_2),$ giving the conclusion.
\end{proof} 

In a similar way, we can show that this adjacency relation may be extended to the middle levels graphs especially as they are embedded in the odd graphs. In this case, we find that the graph determined by the middle levels components forms a middle levels graph. Embed $B_n([k])$ as a middle levels component of $O_{n+1}([k+2])$ and define the set $$MB_n(k):=\set{\textrm{middle levels components of $B_n([k])$}}$$ Now let $\mathcal{L}_n(k)$ be the graph with vertex set $MB_n(k)$ and adjacency relation determined in the same way as the adjacency relation on $\Mcal_n(k).$

\begin{cor} The graph $\mathcal{L}_n(k)$ is isomorphic to $B_{k/2}.$
\end{cor}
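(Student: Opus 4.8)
The plan is to reproduce the argument of Theorem~\ref{thm:oddsuperstructure} in the bipartite setting. I would first pin down the vertex set $MB_n(k)$ of $\mathcal{L}_n(k)$. Deleting the colours $[k]$ from $B_n$ lowers the degree of an $(n-1)$-block $u$ to $n-k+|u\cap[k]|$ and that of an $n$-block $v$ to $n-|v\cap[k]|$, so a component that is $(n-k/2)$-regular consists only of vertices $w$ with $|w\cap[k]|=k/2$; moreover every surviving edge carries a colour outside $[k]$, so $w\cap[k]$ is constant on each component. Thus each middle levels component of $B_n([k])$ is the subgraph $C_T$ spanned by $\{w\in V(B_n):w\cap[k]=T\}$ for a $(k/2)$-subset $T\subseteq[k]$; relabelling $\{k+1,\dots,2n-1\}$ as $[2n-1-k]$ and deleting $T$ from each vertex identifies $C_T$ with $B_{2n-1-k,\,n-1-k/2}=B_{n-k/2}$, so the $\binom{k}{k/2}=2\binom{k-1}{k/2-1}$ graphs $C_T$ are exactly the middle levels components promised by the Corollary to Theorem~\ref{thm:evenmidl}. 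In particular $C_T\mapsto T$ identifies $V(\mathcal{L}_n(k))$ with the $(k/2)$-subsets of $[k]$. (Note that here the complement automorphism merely interchanges $C_T$ and $C_{[k]\setminus T}$ without identifying them, which is why one gets $2\binom{k-1}{k/2-1}=|V(B_{k/2})|$ components rather than $\binom{k-1}{k/2-1}$ as in the odd-graph case.)

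Next I would build the isomorphism $\Phi:\mathcal{L}_n(k)\to B_{k/2}$ by $\Phi(C_T)=v\cap[k-1]$ for $v$ any vertex of $C_T$, i.e.\ $\Phi(C_T)=T\setminus\{k\}$ when $k\in T$ and $\Phi(C_T)=T$ when $k\notin T$. Since $B_{k/2}=B_{k-1,k/2-1}$ has as vertices the $(k/2-1)$- and $(k/2)$-blocks of $[k-1]$, $\Phi$ is well defined; it is a bijection because $C_T$ is recovered from $\Phi(C_T)$ by re-adjoining $k$ precisely when $|\Phi(C_T)|=k/2-1$ (and $\binom{k-1}{k/2-1}+\binom{k-1}{k/2}=\binom{k}{k/2}$). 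For adjacency, the transposition $(a,k)\in S_k$ fixes $[2n-1]\setminus[k]$ pointwise, hence is an automorphism of $B_n$ that preserves $B_n([k])$ and carries $C_T$ to $C_{(a,k)T}$; so $C_T\sim C_{T'}$ in $\mathcal{L}_n(k)$ iff $T'=(a,k)T$ for some $a\in[k-1]$, i.e.\ iff exactly one of $T,T'$ contains $k$ and $T\vartriangle T'=\{k,a\}$. Taking $k\in T$ and $T'=(T\setminus\{k\})\cup\{a\}$ with $a\notin T$, this says exactly that $\Phi(C_T)=T\setminus\{k\}$ is a $(k/2-1)$-block contained in the $(k/2)$-block $T'=\Phi(C_{T'})$, which is the edge relation of $B_{k-1,k/2-1}$; reading an inclusion $\Phi(C_T)\subset\Phi(C_{T'})$ backwards recovers $T'=(a,k)T$ with $a$ the unique element of $\Phi(C_{T'})\setminus\Phi(C_T)\subseteq[k-1]$. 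Hence $\Phi$ is an isomorphism. As with the remark before Theorem~\ref{thm:oddsuperstructure}, Proposition~\ref{prop:twocolor}, applied inside the $O_{n+1}([k+2])$ into which $B_n([k])$ is embedded, shows $C_{T'}=(a,k)C_T$ is realised by length-two paths with ordered labels $\langle a,k\rangle$ or $\langle k,a\rangle$; this is the only place the odd-graph embedding is needed.

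The content is bookkeeping rather than a new idea, and the one place to be careful is that bookkeeping: the bijection between the $\binom{k}{k/2}$ traces $T$ and the two levels of $B_{k/2}$ must be set up so that the only permitted move --- inserting or deleting the distinguished colour $k$ --- becomes inclusion between consecutive levels. It is worth stressing that $\mathcal{L}_n(k)$ is \emph{not} simply the subgraph of $\mathcal{M}_{n+1}(k+2)$ induced on $MB_n(k)$: the adjacency on $\mathcal{L}_n(k)$ uses involutions $(a,k)\in S_k$, whereas using $(a,k+2)\in S_{k+2}$ would only pair $C_T$ with $C_{[k]\setminus T}$ and yield a perfect matching. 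So $\Phi$ must be constructed by hand exactly as above.
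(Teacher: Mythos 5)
Your proof is correct, but it takes a genuinely different route from the paper's. The paper disposes of this corollary in two sentences: it regards $\mathcal{L}_n(k)$ as sitting inside $\mathcal{M}_{n+1}(k+2)\cong O_{k/2+1}$, asserts that $\mathcal{L}_n(k)$ is what remains after ``the elimination of two colors of edges,'' and then quotes Theorem \ref{thm:evenmidl} to identify the regular component of $O_{k/2+1}(2)$ with $B_{k/2}$. You instead rebuild the argument of Theorem \ref{thm:oddsuperstructure} from scratch in the bipartite setting: you identify the components $C_T$ with the $(k/2)$-subsets $T\subseteq[k]$, define $\Phi(C_T)=T\cap[k-1]$ explicitly, and verify directly that the $(a,k)$-adjacency becomes inclusion of a $(k/2-1)$-block in a $(k/2)$-block of $[k-1]$, i.e.\ the edge relation of $B_{k-1,k/2-1}=B_{k/2}$. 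What the paper's approach buys is brevity and reuse of Theorem \ref{thm:oddsuperstructure}; what yours buys is rigor at precisely the point where the paper is most fragile. Your closing observation is the key one: the adjacency on $MB_n(k)$ is defined via transpositions $(a,k)\in S_k$, while the adjacency on $\mathcal{M}_{n+1}(k+2)$ uses $(a,k+2)\in S_{k+2}$, and (as you compute) the subgraph of $\mathcal{M}_{n+1}(k+2)$ induced on $MB_n(k)$ retains only the $(k+1,k+2)$-edges, a perfect matching pairing $C_T$ with $C_{[k]\setminus T}$ --- under the isomorphism $\mathcal{M}_{n+1}(k+2)\cong O_{k/2+1}$ the set $MB_n(k)$ corresponds to the vertices avoiding $k+1$, not to the vertex set of a regular component of $O_{k/2+1}(2)$. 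So the paper's ``delete two colors'' step does not literally produce $\mathcal{L}_n(k)$ without further argument, and your explicit construction is the honest way to close that gap.
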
 
\begin{proof} We note that since $B_n$ is the unique middle levels component of $O_{n+1}(2),$ $\mathcal{L}_n(k)$ may be obtained from $\Mcal_{n+1}(k+2)$ by the elimination of two colors of edges. Since $\Mcal_{n+1}(k+2)$ is isomorphic to $O_{\frac{k}{2}+1}$, $\mathcal{L}_n(k)$ is isomorphic to the regular component of $O_{\frac{k}{2}+1}(2)$ which is $B_{\frac{k}{2}}.$
\end{proof}

In addition to giving a complete analysis of the regular subgraphs of $O_n(k),$ theorem \ref{thm:oddsuperstructure} gives us a method for describing the outer level of the graph $O_n$. For the next several results, we will lean heavily on the following result of Biggs \cite{Biggs}.

\begin{prop}[Biggs, 1979]\label{prop:intnum} If $v,w\in V(O_n)$ then $d(v,w)=2r$ if and only if $|v\cap w|=n-1-r$ and $d(v,w)=2r+1$ if and only if $|v\cap w|=r$. Hence the diameter of $O_n$ is $n-1.$
\end{prop}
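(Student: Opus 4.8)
The plan is to reduce the whole question to the evolution of the single statistic $f(u):=|u\cap w|$ along a walk in $O_n$, and to squeeze the distance from two sides: explicit walks $v\to w$ (upper bounds) and a ``slow change'' estimate on $f$ (lower bounds). The essential ingredient is a one-step recursion. If $u\sim u'$ in $O_n$, then $u$ and $u'$ are disjoint $(n-1)$-subsets of $[2n-1]$, so $u\cup u'$ omits exactly one element $x$ and $u'=[2n-1]-(u\cup\set{x})$. Writing $\overline{w}=[2n-1]-w$, a direct count gives $|u'\cap w|=(n-1)-|u\cap w|$ if $x\in\overline{w}$ and $|u'\cap w|=(n-2)-|u\cap w|$ if $x\in w$. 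Since the $n$ neighbours of $u$ are in bijection with the $n$ choices of $x\in[2n-1]-u$, both outcomes are realizable whenever the corresponding colour class at $u$ is nonempty: there is always a neighbour with $f=(n-1)-f(u)$, and --- provided $u\neq w$ --- also one with $f=(n-2)-f(u)$.

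Next I would get the upper bounds by construction. Set $s:=|v\cap w|$ and encode a vertex $u$ by the pair $(a,b):=(|u\cap w|,\,n-1-|u\cap w|)$. By the recursion, the move ``$x\in\overline w$'' acts by $(a,b)\mapsto(b,a)$ and the move ``$x\in w$'' by $(a,b)\mapsto(b-1,a+1)$; composing them one way yields a $2$-step gadget raising $|u\cap w|$ by $1$, and the other way a $2$-step gadget lowering it by $1$. Applying the first gadget $n-1-s$ times, starting from $v$, ends at the vertex with $f=n-1$, which is $w$: a walk of length $2(n-1-s)$. Applying the second gadget $s$ times ends at a vertex with $f=0$, i.e.\ a neighbour of $w$, and one further step reaches $w$: a walk of length $2s+1$. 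A short verification --- that the colour classes invoked are nonempty at every stage and that $w$ is not met before the end --- shows both walks are legitimate, so $d(v,w)\le\min\bigl(2(n-1-s),\,2s+1\bigr)$.

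For the lower bounds, let $v=u_0,u_1,\dots,u_d=w$ be any walk and put $f_i:=|u_i\cap w|$, so that $f_{i+1}=(n-1)-f_i-\epsilon_i$ with $\epsilon_i\in\set{0,1}$ by the recursion; then $f_{i+2}=f_i+\epsilon_i-\epsilon_{i+1}$, hence $|f_{2j}-f_0|\le j$ for all $j$. Now let $D=d(v,w)$ be realized by a geodesic. If $D=2r$ is even, then $f_{2r}=|w\cap w|=n-1$, so $n-1-s=f_{2r}-f_0\le r$ and $D\ge 2(n-1-s)$; with the upper bound this forces $D=2(n-1-s)$, i.e.\ $|v\cap w|=n-1-r$. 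If $D=2r+1$ is odd, then $u_{2r}$ is a neighbour of $w$, so $f_{2r}=0$, whence $s=|f_{2r}-f_0|\le r$ and $D\ge 2s+1$; with the upper bound this forces $D=2s+1$, i.e.\ $|v\cap w|=r$. Since $D$ has a definite parity and $D\le\min(2(n-1-s),2s+1)$, the two cases are mutually exclusive and exhaustive, so $d(v,w)=\min\bigl(2(n-1-s),\,2s+1\bigr)$, which is exactly the pair of biconditionals once they are read as giving the distance after its parity is known. The diameter is then $\max_{0\le s\le n-1}\min(2(n-1-s),2s+1)$; the two linear functions cross near $s=(2n-3)/4$, and evaluating at the nearest integers yields $n-1$.

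The part that carries the real content is the one-step recursion together with its consequence $|f_{2j}-f_0|\le j$: once that is in hand the lower bounds fall out immediately. The delicate points are book-keeping ones --- checking in the constructions that each move is available and that $w$ is not reached prematurely, and observing that the two stated biconditionals are consistent only when interpreted as ``the distance, given its parity'', equivalently $d(v,w)=\min\bigl(2(n-1-|v\cap w|),\,2|v\cap w|+1\bigr)$; I would flag this explicitly rather than leave it implicit.
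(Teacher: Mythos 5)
The paper does not actually prove this proposition: it is quoted from Biggs and used as a black box, so there is no in-paper argument to compare yours against. Judged on its own, your proof is correct and complete. The one-step recursion $|u'\cap w|\in\set{(n-1)-|u\cap w|,\ (n-2)-|u\cap w|}$ is right (the element $x$ omitted from $u\cup u'$ lies in $w$ or in its complement, and the counts of available $x$ are $(n-1)-f(u)$ and $1+f(u)$ respectively, which justifies your availability claims); the two-step gadgets give the upper bounds $2(n-1-s)$ and $2s+1$; and the telescoped estimate $|f_{2j}-f_0|\le j$ gives the matching lower bounds, with parity separating the two cases. You are also right to flag that the two biconditionals as literally stated are mutually inconsistent for extreme $r$ (e.g.\ $s=0$ gives $d=1$, not $2(n-1)$) and must be read as the single formula $d(v,w)=\min\bigl(2(n-1-|v\cap w|),\,2|v\cap w|+1\bigr)$ --- that is indeed how Biggs's result is standardly stated, and your argument proves exactly that formula. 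The diameter computation $\max_s\min(2(n-1-s),2s+1)=n-1$ checks out at the integer(s) nearest the crossing point in both parities of $n$. The only cosmetic remark: the check that ``$w$ is not met prematurely'' in the constructed walks is unnecessary for an upper bound on distance, so you could drop it.
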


For any vertex $v\in V(O_n)$, we will define the \textit{bottom level of $O_n$} to be the subgraph of $O_n$ induced by the set $$\delta_{n-1}(v):=\set{w\in V(O_n)|d(v,w)=n-1}.$$ We know further from Biggs's work that this graph is regular with degree $\lceil \frac{n}{2}\rceil$, but we can go one step further. With the results we have developed, we can determine the structure of this subgraph. It is not surprising that this is a disconnected collection of $\binom{2\lfloor\frac{n}{2}\rfloor-1}{\lfloor\frac{n}{2}\rfloor-1}$ copies of $B_{\lceil \frac{n}{2}\rceil}$, and that moreover, we will see from theorem \ref{thm:oddsuperstructure} that the components have an adjacency relationship which naturally relates them to the odd graph $O_{\lfloor\frac{n}{2}\rfloor}$.

Now, if $w\in\delta_{n-1}(v)$ then when $n-1=2r$ we have $|v\cap w|=n-1-r=\frac{n-1}{2}$ and when $n-1=2r+1$ we have $|v\cap w|=r=\frac{n}{2}-1,$ and thus (with $\overline{v}=[2n-1]-v$), $|\overline{v}\cap w|=n-1-(\frac{n}{2}-1)=\frac{n}{2}=r+1$. In the first case, we have that $\delta_{n-1}(v)$ is a collection of regular components of $O_n(v)$ which are hence by theorem \ref{thm:evenmidl} isomorphic to $B_{(n-(n-1)/2)}=B_{(n+1)/2}=B_{\lceil n/2\rceil}$. In the second case, we have that $\delta_{n-1}(v)$ is a collection of regular components of $O_n(\overline{v})$ and is thus isomorphic to $B_{n/2}=B_{\lceil n/2\rceil}.$ This gives most of the proof of the following result.

\begin{prop} Let $v$ be a vertex of $O_n.$ The subgraph of $O_n$ induced by the vertices of $\delta_{n-1}(v)$ is a collection of $\binom{2\lfloor\frac{n}{2}\rfloor-1}{\lfloor\frac{n}{2}\rfloor-1}$ disjoint copies of $B_{\lceil \frac{n}{2}\rceil}$
\end{prop}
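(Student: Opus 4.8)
\smallskip
\noindent\textbf{Proof proposal.} The plan is to make precise the two cases already separated in the discussion above — $n$ odd and $n$ even — and in each to identify $\delta_{n-1}(v)$ \emph{exactly} with the union of the regular components of a suitable color‑deleted odd graph, so that Theorem~\ref{thm:evenmidl} immediately supplies both the isomorphism type $B_{\lceil n/2\rceil}$ and the number of copies. When $n$ is odd I would delete the $n-1$ colors lying in $v$ and work inside $O_n(v)\cong O_n(n-1)$ (with $n-1$ even); when $n$ is even I would instead delete the $n$ colors lying in $\overline v=[2n-1]-v$ and work inside $O_n(\overline v)\cong O_n(n)$. In both cases the deletion parameter is a positive even integer, so Theorem~\ref{thm:evenmidl} applies.

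For $n$ odd, write $n-1=2r$. By Proposition~\ref{prop:intnum}, $w\in\delta_{n-1}(v)$ iff $d(v,w)=2r$ iff $|v\cap w|=r$. Using the standard description of the star at a vertex $w$ of $O_n$ — the edge of color $x$ is present iff $x\notin w$, and its far endpoint is $[2n-1]-w-\set{x}$ — the edges of $O_n$ at $w$ deleted in $O_n(v)$ are exactly those of color $x\in v\setminus w$, of which there are $|v|-|v\cap w|$; hence $d_{O_n(v)}(w)=n-\bigl((n-1)-|v\cap w|\bigr)=1+|v\cap w|$, so the vertices of $\delta_{n-1}(v)$ are precisely those of ``regular'' degree $1+r=(n+1)/2$ in $O_n(v)$. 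I would then verify two closure facts by the same computation. First, if $w\in\delta_{n-1}(v)$ and $w'$ is a neighbour of $w$ in $O_n(v)$ across a color $x\notin v$, then $v\cap w'=(v\setminus w)\setminus\set{x}=v\setminus w$ (as $x\notin v$), so $|v\cap w'|=(n-1)-r=r$ and $w'\in\delta_{n-1}(v)$; thus $\delta_{n-1}(v)$ is a union of connected components of $O_n(v)$, each therefore $((n+1)/2)$‑regular. Second, if $w,w'\in\delta_{n-1}(v)$ were adjacent in $O_n$ across a color $x\in v$, the same identity with $x\in v\setminus w$ forces $|v\cap w'|=r-1$, a contradiction; so the subgraph of $O_n$ induced by $\delta_{n-1}(v)$ contains no deleted‑color edge and therefore equals the subgraph of $O_n(v)$ induced by $\delta_{n-1}(v)$. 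Combining the two, this induced subgraph is precisely the union of all $((n+1)/2)$‑regular components of $O_n(v)\cong O_n(n-1)$, and Theorem~\ref{thm:evenmidl} (with even parameter $k=n-1$) identifies these as $\binom{n-2}{(n-3)/2}$ disjoint copies of $B_{(n+1)/2}=B_{\lceil n/2\rceil}$ (using $n-(n-1)/2=(n+1)/2$).

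The case $n$ even is entirely parallel through the odd branch of Proposition~\ref{prop:intnum}: writing $n-1=2r+1$ one gets $w\in\delta_{n-1}(v)$ iff $|v\cap w|=r=n/2-1$; the deleted star‑edges at $w$ in $O_n(\overline v)$ are those of color $x\in\overline v\setminus w$, of which there are $|\overline v|-|\overline v\cap w|=n-\bigl((n-1)-|v\cap w|\bigr)=1+|v\cap w|$, so $d_{O_n(\overline v)}(w)=n-1-|v\cap w|$, which equals the ``regular'' value $n/2$ exactly when $w\in\delta_{n-1}(v)$. The two closure checks run verbatim with $\overline v$ in place of $v$, and Theorem~\ref{thm:evenmidl} with $k=|\overline v|=n$ gives $\binom{n-1}{n/2-1}$ disjoint copies of $B_{n/2}=B_{\lceil n/2\rceil}$. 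Since $\binom{n-2}{(n-3)/2}$ and $\binom{n-1}{n/2-1}$ are the two parity incarnations of $\binom{2\lfloor n/2\rfloor-1}{\lfloor n/2\rfloor-1}$, the two cases assemble into the claimed count; checking this last binomial identity is a one‑line factorial manipulation, exactly as at the end of the proof of Theorem~\ref{thm:evenmidl}.

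I expect the only genuine content to be the closure bookkeeping: one must confirm simultaneously that $\delta_{n-1}(v)$ is a union of \emph{full} components of the color‑deleted graph and that it acquires no extra edges when viewed inside $O_n$ itself — i.e.\ that $O_n$ has no deleted‑color edge between two vertices at distance $n-1$ from $v$. Both points are short intersection‑size computations via Proposition~\ref{prop:intnum} and the star description above, but they are precisely the step the preceding discussion leaves open (``this gives most of the proof''), and they must be carried out in both parities. Everything else is a direct citation of Theorem~\ref{thm:evenmidl} (with Proposition~\ref{prop:isomin} for the count).
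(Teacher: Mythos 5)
Your proof is correct, and it diverges from the paper's at exactly one point: how the number of components is obtained. The paper, like you, reduces to the identification of $\delta_{n-1}(v)$ with regular components of $O_n(v)$ (for $n$ odd) or $O_n(\overline v)$ (for $n$ even), but it then gets the count $\binom{2\lfloor n/2\rfloor-1}{\lfloor n/2\rfloor-1}$ by a direct enumeration: it computes $|\delta_{n-1}(v)|$ as a product of binomial coefficients (choosing the prescribed number of elements from $v$ and from $\overline v$) and divides by $|V(B_{\lceil n/2\rceil})|$. You instead read the count straight off Theorem~\ref{thm:evenmidl} as $\binom{k-1}{k/2-1}$ with $k=n-1$ or $k=n$, after proving that $\delta_{n-1}(v)$ is \emph{exactly} the union of all regular components. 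Your route has two advantages: the binomial manipulation is shorter (it is the one already done in Theorem~\ref{thm:evenmidl}), and, more substantively, your two closure checks --- that $\delta_{n-1}(v)$ is closed under adjacency in the color-deleted graph, and that no edge of a deleted color joins two vertices of $\delta_{n-1}(v)$ in $O_n$ --- are needed even for the paper's division argument to be valid (one must know each component is a full copy of $B_{\lceil n/2\rceil}$ and that the induced subgraph acquires no extra edges), yet the paper delegates them to the preceding discussion, which only computes the intersection sizes $|v\cap w|$. So you have supplied the bookkeeping the paper leaves implicit; the degree formula $d_{O_n(v)}(w)=1+|v\cap w|$ and its companion for $\overline v$ are exactly the right tools, and your verification that degree $\lceil n/2\rceil$ characterizes membership in $\delta_{n-1}(v)$ correctly yields that the union is over \emph{all} regular components. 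The paper's vertex count buys an independent consistency check on the formula, but nothing you have omitted.
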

\begin{proof} The preceding discussion tells us that the components of this graph are the middle levels graphs. To see that the number of such graphs agrees with our claim, a counting argument suffices. If $n-1=2r$, then every vertex of $\delta_{n-1}(v)$ has $r$ elements from $v$ and $r$ elements from $\overline{v}$, thus the total number of vertices is $$\binom{n}{r}\binom{n-1}{r}.$$ In this case, the components are copies of $B_{(n+1)/2}$ which has $2\binom{n}{(n-1)/2}=2\binom{n}{r}$ vertices. Dividing the number of vertices by this value gives: $$\frac{1}{2}\binom{n-1}{(n-1)/2}=\frac{(n-1)!}{2((n-1)/2)!((n-1)/2)!}=\frac{(n-2)!}{((n-3)/2)!((n-1)/2)!}$$$$=\binom{n-2}{(n-3)/2}=\binom{2\lfloor\frac{n}{2}\rfloor-1}{\lfloor\frac{n}{2}\rfloor-1}.$$ In the case where $n-1=2r+1$, every vertex of $\delta_{n-1}(v)$ has $\frac{n}{2}$ elements in common with $\overline{v}$ and $\frac{n}{2}-1$ elements in common with $v$. As such there are $$\binom{n}{r+1}\binom{n-1}{r-1}$$ vertices in $\delta_{n-1}(v)$. Furthermore, $B_{n/2}$ has $2\binom{n-1}{n/2-1}=2\binom{n-1}{r-1}$ vertices so that the total number of components of $\delta_{n-1}(v)$ is given by $$\frac{1}{2}\binom{n}{r+1}=\frac{n!}{2(n/2)!(n/2)!}=\frac{(n-1)!}{((n-2)/2)!(n/2)!}$$ $$=\binom{n-1}{n/2-1}=\binom{2\lfloor\frac{n}{2}\rfloor-1}{\lfloor\frac{n}{2}\rfloor-1}.$$
\end{proof}

We should note that the subgraph of $O_n$ induced by $\delta_{n-1}(v)$ is determined by either $O_n(v)=O_n(n-1)$ or $O_n(\overline{v})=O_n(n)$, so that by theorem \ref{thm:evenmidl} in both cases, the total number of middle levels components in either $O_n(v)$ or $O_n(\overline{v})$ is exactly the number of middle levels components in $\delta_{n-1}(v).$ This allows us to further conclude, from theorem \ref{thm:oddsuperstructure} that these components form either the graph $\Mcal_n(n-1)\cong O_{(n-1)/2}$, when $n$ is odd or $\Mcal_n(n)\cong O_{n/2}$ when $n$ is even. In both cases, we have a graph which is isomorphic to the odd graph $O_{\lfloor\frac{n}{2}\rfloor}$.

\section{Recursion and relation to Catalan numbers}
We are now ready to make explicit the recursion relation on both the set of odd graphs and the set of middle levels graphs. From theorem \ref{dcovthm} we have that $B_{n-1}$ is a double cover of $O_{n-1}$ and by theorem \ref{thm:evenmidl} $B_{n-1}$ may be embedded in $O_n$ so we see the following chain: $$\dots\hookrightarrow O_{n-1}\leftarrow B_{n-1}\hookrightarrow O_n\leftarrow B_n\hookrightarrow O_{n+1}\leftarrow B_{n+1}\hookrightarrow\dots $$ If we are working in the category of odd graphs, this recursion allows us to take paths in $O_{n-1}$, lift them to pairs of paths in $B_{n-1}$ and then embed them as pairs of paths in $O_n$. Likewise, if we are working in middle levels graphs, this recursion allows us to take paths in $B_{n-1}$, embed them in $O_{n}$ and then lift them to pairs of paths in $B_n$. In both cases we have a potential method for taking path questions in $O_{n-1}$ or $B_{n-1}$ and relating them to path questions in $O_n$ and $B_n.$

When we also take into account the following corollary to theorem \ref{thm:knesersymmetric},

\begin{cor} Odd graphs and middle levels graphs are vertex transitive.
\end{cor}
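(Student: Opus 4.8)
The plan is to obtain this as an immediate specialization of Theorem~\ref{thm:knesersymmetric}, exactly as the corollary for general Kneser and bipartite Kneser graphs was obtained: since $O_n=K_{2n-1,n-1}$ and $B_n=B_{2n-1,n-1}$ by definition, vertex transitivity of $O_n$ and $B_n$ is a special case of vertex transitivity of $K_{n,k}$ and $B_{n,k}$. For completeness I would spell out the short argument rather than merely cite it, and the only place that needs a second sentence is the bipartite case.

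For $O_n$: let $v,w\in V(O_n)$, so $v$ and $w$ are $(n-1)$-subsets of $[2n-1]$. Choose any bijection $v\to w$ and extend it arbitrarily to a permutation $\sigma\in S_{2n-1}$. By Theorem~\ref{thm:knesersymmetric}, $\sigma$ acts as an automorphism of $O_n$, and by construction $\sigma(v)=w$; hence $O_n$ is vertex transitive.

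For $B_n$: let $v,w\in V(B_n)$. If $|v|=|w|$, the argument above applies verbatim, producing $\sigma\in S_{2n-1}$ with $\sigma(v)=w$, which is an automorphism of $B_n$ again by Theorem~\ref{thm:knesersymmetric}. If $|v|\neq|w|$, then one of them has cardinality $n-1$ and the other $n$; apply the complement map $\kappa(u)=\overline{u}$, which is an automorphism of $B_n$ by the proposition established above, so that $|\kappa(w)|=|v|$. Pick $\sigma\in S_{2n-1}$ with $\sigma(v)=\kappa(w)$; then $\kappa\circ\sigma$ is an automorphism of $B_n$ carrying $v$ to $\kappa(\kappa(w))=w$. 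Hence $B_n$ is vertex transitive.

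There is essentially no obstacle here. The only point requiring any care is the cross-part case in the bipartite setting, where a plain permutation of $[2n-1]$ cannot move an $(n-1)$-set onto an $n$-set; this is resolved precisely by composing with the complement automorphism $\kappa$, and everything else reduces to the routine extension-of-a-bijection argument.
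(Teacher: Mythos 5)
Your proposal is correct and takes the same route as the paper, which states this corollary as an immediate specialization of Theorem~\ref{thm:knesersymmetric} (via the earlier corollary on Kneser and bipartite Kneser graphs) and offers no further proof. Your explicit handling of the cross-part case in $B_n$ via the complement automorphism $\kappa$ is a worthwhile detail the paper leaves implicit, since the $S_{2n-1}$-action preserves cardinality and so cannot by itself move an $(n-1)$-set to an $n$-set.
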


\noindent we see that this recursion may be useful in resolving the Lov\'asz Conjecture for odd graphs and middle levels graphs. Both of these categories have been famously resistant to definitive conclusions with regards to the Lov\'asz Conjecture, but while the middle levels has been proven recently by M\"utze \cite{Mutze}, the solution to the question for the category of odd graphs is still unresolved.

\begin{conj}\label{conj:odd} For any positive integer $n\neq 3$, the odd graph $O_n$ is Hamiltonian.
\end{conj}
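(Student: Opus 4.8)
\emph{Proof proposal.} The plan is to induct on $n$, exploiting the decomposition of $O_n$ implicit in Theorem~\ref{thm:evenmidl}. Deleting the two colours $\{2n-2,2n-1\}$ from $O_n=O_n([2n-1])$, Propositions~\ref{prop:bireg} and \ref{prop:isomin} tell us that $O_n(\{2n-2,2n-1\})$ has exactly two components: one isomorphic to $B_{n-1}$, namely the vertices of $O_n$ meeting $\{2n-2,2n-1\}$ in exactly one element, and one isomorphic to the remainder graph $R_n^2$, namely the vertices meeting $\{2n-2,2n-1\}$ in none or both. A direct check shows that the edges of $O_n$ joining these two components are precisely the colour-$(2n-2)$ edges together with the colour-$(2n-1)$ edges (each a matching on the vertices not containing that colour), and that \emph{every} vertex of the $B_{n-1}$-side is incident with exactly one such ``bridge'' edge, whose other endpoint lies on the larger side of the (unbalanced) bipartition of $R_n^2$. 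Thus $O_n$ is the disjoint union $B_{n-1}\sqcup R_n^2$ with a prescribed bipartite bundle of bridge edges added on top. The base cases are dispatched by hand: $O_1,O_2$ are trivial, $O_3$ is the Petersen graph and is excluded, and $O_4$ (and a handful of further small cases, for safety) is checked Hamiltonian by explicit construction or by computer.

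For the inductive step, take a Hamiltonian cycle $C$ of $B_{n-1}$; this is available either from M\"utze's theorem \cite{Mutze} that all middle levels graphs are Hamiltonian, or — in the spirit of Section~5 — by lifting a Hamiltonian cycle of $O_{n-1}$ along the bipartite double cover $B_{n-1}\to O_{n-1}$ of Corollary~\ref{cor:middubodd} and, should the lift split into two cycles, splicing them together along a shared colour. We then reroute $C$ through $R_n^2$: choose pairwise non-adjacent edges $e_1,\dots,e_t$ of $C$, delete each $e_j=(p_j,q_j)$, and replace it by the detour $p_j\to p_j'\to(\text{path }P_j\text{ in }R_n^2)\to q_j'\to q_j$, where $p_j',q_j'$ are the unique $R_n^2$-endpoints of the bridge edges at $p_j,q_j$. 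The result is a Hamiltonian cycle of $O_n$ precisely when the $P_j$ are vertex-disjoint and together span $R_n^2$. In other words, the whole problem reduces to a \emph{spanning path-system lemma for $R_n^2$}: the graph $R_n^2$ must be coverable by $t$ vertex-disjoint paths whose $2t$ endpoints are exactly the images, under the bridge matchings, of some $t$ independent edges of $C$.

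This last lemma is where I expect the genuine difficulty to lie, and it is the reason the conjecture remains open. The obstruction is visible already in the arithmetic: the two sides of $R_n^2$ have sizes $\binom{2n-3}{n-1}$ and $\binom{2n-3}{n-3}$, so for $n\ge 4$ it is an \emph{unbalanced} bipartite graph, hence not traceable, and the number $t$ of detours is forced to be at least the surplus $\binom{2n-3}{n-1}-\binom{2n-3}{n-3}$; when $n=3$ the graph $R_3^2$ collapses to the star $K_{1,3}$, which admits no such path system at all, and this is exactly the mechanism behind the hypothesis $n\ne 3$. To control $R_n^2$ one iterates the construction inside it: by the analysis of $O_n(4)$ via Propositions~\ref{prop:bireg} and \ref{prop:isomout}, after deleting two further colours $R_n^2$ itself splits as $B_{n-2}\sqcup R_n^4$, and so on down a chain of biregular remainder graphs, so one is led to a nested family of path-cover statements — one supplied by M\"utze for each middle levels piece, and one to be proved for each $R_n^{2j}$ — together with a matching condition at each scale ensuring that the path-endpoints produced by the inner decomposition can be aligned with independent edges of the cycle assembled at the outer one (here the structure of Theorem~\ref{thm:oddsuperstructure} organizing the pieces should help). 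Making these alignments simultaneously compatible across all scales — most plausibly by producing a sufficiently ``flexible'' Hamiltonian cycle of $B_{n-1}$ that meets the bridge edges in a prescribed pattern, with analogous control for each $R_n^{2j}$ — is the crux of the argument and is precisely the gap that leaves Conjecture~\ref{conj:odd} unresolved.
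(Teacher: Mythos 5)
The statement you have been asked to prove is labelled a \emph{conjecture} in the paper, and the paper offers no proof of it: Section 5 explicitly records that the general problem is open (only $n\le 14$ being settled, by ad hoc and computational means), and the author states that the candidate paths produced by exactly this lifting strategy could not be shown to be simple. Your proposal is, commendably, honest about the same thing — it is a reduction, not a proof. The structural setup you describe is correct and matches the paper's Section 5 narrative: deleting two colours splits $O_n$ into the middle levels component $B_{n-1}$ (vertices meeting $\{2n-2,2n-1\}$ in exactly one element) and the remainder graph $R_n^2$ (vertices meeting it in zero or two elements); every $B_{n-1}$-vertex carries exactly one bridge edge, landing on the larger, degree-$(n-2)$ side $U$ of $R_n^2$; and a Hamiltonian cycle of $B_{n-1}$ (available from M\"utze, or by lifting along the double cover of Corollary \ref{cor:middubodd}) can in principle be rerouted through $R_n^2$ by vertex-disjoint detours.

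The gap is the one you yourself name: the spanning path-system lemma for $R_n^2$. Nothing in the paper, and nothing in your proposal, establishes that $R_n^2$ admits vertex-disjoint spanning paths whose endpoints are exactly the bridge-images of an independent set of edges of the chosen cycle $C$. Note that the constraint is tighter than you state: since every detour path has \emph{both} endpoints in $U$, each path covers exactly one more $U$-vertex than $W$-vertex, so the number of detours is forced to equal the surplus $\binom{2n-3}{n-1}-\binom{2n-3}{n-3}$ exactly, not merely to exceed it; and the $2t$ prescribed endpoints are not free — they must be bridge-partners of independent edges of the particular cycle $C$, which couples the two halves of the construction (for $n=4$ the count forces \emph{every} vertex of $U$ to be a path endpoint). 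The iterated decomposition $R_n^2\supset B_{n-2}\sqcup R_n^4\supset\cdots$ compounds rather than resolves this coupling, since a compatible alignment is then needed at every scale simultaneously. Your observation that $R_3^2\cong K_{1,3}$ admits no such path system is a nice concrete way to see where $n=3$ fails, but it does not supply the positive statement needed for $n\ge 4$. So the proposal does not prove Conjecture \ref{conj:odd}; it reproduces the paper's own acknowledged programme and correctly isolates the missing lemma.
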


This question was first posed in a limited way by Biggs \cite{Biggs2} and was resolved for the cases $n=4,5$ by Balaban \cite{Balaban}. The cases $n=6,7$ were resolved by Meredith and Lloyd \cite{MerLloyd1}\cite{MerLloyd2}. The $n=8$ case was resolved by Mather \cite{Mather}, and the cases of $n=9-14$ were all resolved by Savage and Shields \cite{ShieldsSavage}.

The fact that we have a very limited number of solutions to this problem is partially attributable to the extremely rapid growth in size of these graphs, but we should not discount the complexity of the odd graphs. It is worth noting that the odd graph $O_3$ (the Peterson graph) is one of the only known counter-examples to the Lov\'asz conjecture and one of the other four is the Coxeter graph, which occurs naturally as a subgraph of $O_4$, (of the remaining three counter-examples, one is the complete graph $K_2$ which can be expressed as the middle levels graph $B_1$ and the other two are derived from the Peterson graph and the Coxeter graph by replacing each vertex with a triangle). Thus it is entirely reasonable to speculate that other counter-examples to the conjecture may occur as subgraphs of $O_n$ for some large values of $n$.

The recursion relation expressed above may give us some insight into solutions to this problem, and it sheds light on several previously unremarked connections between odd graphs and Catalan numbers. We will proceed with a largely narrative discussion of how Hamiltonicity of $O_n$ may be related to Hamiltonicity of $O_{n+1}$, which will lead to a discussion of the connections to the Catalan numbers.

If we suppose that $O_n$ has a Hamiltonian circuit, then under the covering map described above, such a circuit may be lifted to either a Hamiltonian circuit in $B_n$ (if $|V(O_n)|$ is odd) or a pair of antipodal and complementary circuits in $B_n$, each of size $|V(O_n)|$. The former case only occurs when $n$ is a power of 2, and thus we are dealing predominantly with the latter case. In both cases we can make some conclusions about the geometric structure of these paths (or path) as they (or it) is embedded into $O_{n+1}.$

When $B_n$ is embedded in $O_{n+1}$ we find that the antipodal points of $B_n$ are now each linked to one another by a path of length 2. To make this explicit, we recall that $B_n$ is a connected regular component of $O_{n+1}(2)=O_{n+1}(\set{a,b})$ and so every vertex of $B_n$ has exactly one of the two elements $\set{a,b}$ as an element. Thus the permutation $(a,b)$ acts as an automorphism on $O_{n+1}$ which fixes all of the elements of the remainder graph $R_{n+1}^2$ and interchanges antipodal elements of $B_{n}$. By proposition \ref{prop:twocolor}, then there is a path of the form either $\langle a,b\rangle$ or $\langle b,a\rangle$ in $O_{n+1}$ from any vertex of $B_n$ to its antipode. The middle vertex of this path must necessarily be an element of $R_{n+1}^2$. It might be possible to exploit this relationship to construct a Hamiltonian path in $O_{n+1}$ by breaking the lifted paths into smaller paths connected by paths in $R_{n+1}^2$. The author has constructed such paths with length $|V(O_{n+1})|$ in all cases but one, but has failed to prove that the paths so constructed were simple, moreover in the exceptional case ($n=2^k-1$) we have the additional problem that the size of $R_{n+1}^2$ is odd and this necessarily hinders our method of path construction. The author has omitted descriptions of these paths from this work due to the fact that the issue of their viability is still unresolved. This section has been included in the current discussion because in the construction of these potential paths, the author has uncovered several relationships between the class of odd graphs and the sequence of Catalan numbers. Recall that the \textit{Catalan numbers} are the sequence of numbers given by the formula: $$c_n=\frac{1}{n+1}\binom{2n}{n}.$$

At various points in the process of constructing potential paths it has been instructive to compare the sizes of the graphs $B_n$ and $R_{n+1}^2$. We find first that the difference between these two values is always given by a Catalan number.

\begin{prop} $|B_n|-|R_{n+1}^2|=c_n$ for all $n\geq1$ where $c_n$ is the $n$th Catalan number.
\end{prop}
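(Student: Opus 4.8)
The plan is to evaluate both cardinalities explicitly and then recognize their difference as a standard form of the Catalan number. Since $B_n$ is by definition the bipartite Kneser graph $B_{2n-1,n-1}$, its vertex set consists of all $(n-1)$- and $n$-subsets of $[2n-1]$, so that $|B_n|=\binom{2n-1}{n-1}+\binom{2n-1}{n}=2\binom{2n-1}{n-1}$, using $\binom{2n-1}{n}=\binom{2n-1}{n-1}$.

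For $|R_{n+1}^2|$, proposition \ref{prop:isomout} allows the computation to be carried out in any convenient host, so I would realize $R_{n+1}^2$ as the unique $(n+1,n-1)$-biregular component of $O_{n+1}(S)$ with $S=\set{2n,2n+1}$, exactly as constructed in the proof of proposition \ref{prop:bireg} with $T=\varnothing$. In that description the vertex set of $R_{n+1}^2$ is $U_\varnothing\cup W_\varnothing$, where $U_\varnothing$ is the family of $n$-subsets of $[2n+1]$ disjoint from $S$, that is, the $n$-subsets of $[2n-1]$, of which there are $\binom{2n-1}{n}$; and $W_\varnothing$ is the family of $n$-subsets of $[2n+1]$ containing $S$, that is, the sets $\set{2n,2n+1}\cup A$ with $A$ an $(n-2)$-subset of $[2n-1]$, of which there are $\binom{2n-1}{n-2}$. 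Hence $|R_{n+1}^2|=\binom{2n-1}{n}+\binom{2n-1}{n-2}$. As a cross-check, since $O_{n+1}(2)$ has precisely the two components $B_n$ (by theorem \ref{thm:evenmidl}, since $\binom{1}{0}=1$) and $R_{n+1}^2$ (by proposition \ref{prop:isomin}, since $\binom{2}{0}=1$), one also gets $|R_{n+1}^2|=|V(O_{n+1})|-|B_n|=\binom{2n+1}{n}-2\binom{2n-1}{n-1}$, which agrees with the previous expression after one use of Pascal's rule.

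Subtracting, $|B_n|-|R_{n+1}^2|=2\binom{2n-1}{n-1}-\binom{2n-1}{n}-\binom{2n-1}{n-2}=\binom{2n-1}{n-1}-\binom{2n-1}{n-2}$, and it remains only to identify this difference with $c_n$. I would do this either by a short factorial computation (extract the common factor $\frac{(2n-1)!}{(n-2)!\,n!}$, combine $\frac{1}{n-1}-\frac{1}{n+1}=\frac{2}{(n-1)(n+1)}$, and simplify to $\frac{(2n)!}{(n+1)!\,n!}=c_n$) or by invoking the classical identity $c_n=\binom{2n}{n}-\binom{2n}{n+1}$ together with two applications of Pascal's rule, since $\binom{2n}{n}-\binom{2n}{n+1}=\binom{2n-1}{n-1}-\binom{2n-1}{n+1}=\binom{2n-1}{n-1}-\binom{2n-1}{n-2}$. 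There is no genuine obstacle here: the argument is entirely bookkeeping, and the only point requiring care is the second step, namely pinning down the exact vertex set, and hence cardinality, of $R_{n+1}^2$. For $n=1$, where $R_2^2$ degenerates to the single-vertex component of $O_2(2)$, the identity reads $2-1=1=c_1$, as required.
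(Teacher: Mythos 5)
Your proof is correct and takes essentially the same approach as the paper: both arguments reduce the statement to a direct computation of $|B_n|=2\binom{2n-1}{n-1}$ and $|R_{n+1}^2|=\binom{2n+1}{n}-2\binom{2n-1}{n-1}$ followed by binomial algebra, and your cross-check recovers exactly the expression the paper starts from. The only (welcome) difference is that you justify the formula for $|R_{n+1}^2|$ explicitly by counting $U_\varnothing$ and $W_\varnothing$, where the paper simply asserts it as $|V(O_{n+1})|-|B_n|$.
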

\begin{proof} We have: $$|B_n|-|R_{n+1}^2|=2\binom{2n-1}{n-1}-\left(\binom{2n+1}{n}-2\binom{2n-1}{n-1}\right)$$$$=4\binom{2n-1}{n-1}-\binom{2n+1}{n}=\frac{4(2n-1)!(n+1)n-(2n+1)!}{n!(n+1)!}$$$$=\frac{(2n)!(2n+2-(2n+1))}{n!(n+1)!}=\frac{1}{n+1}\binom{2n}{n}=c_n.$$
\end{proof}

When we solve for the size of the remainder graphs $R_{n+1}^2$ we also get a sequence of numbers related to Catalan numbers $$|R_{n+1}^2|=\binom{2n+1}{n}-2\binom{2n-1}{n-1}=\frac{(2n+1)!}{n!(n+1)!}-\binom{2(2n-1)(n+1)n}{(n+1)!n!}$$$$=\frac{(2n)!((2n+1)-(n+1))}{(n+1)!n!}=\binom{2n}{n-1}.$$ This is the sequence labeled A001791 in the Online Encyclopedia of Integer Sequences \cite{OEIS}, and it is used to count several varieties of substructures of Dyck paths. The relationship between Dyck paths and Catalan numbers has been well-documented, so we are provided with yet another connection between odd graphs and Catalan numbers.

The odd graphs themselves have a seemingly undocumented relationship with the Catalan numbers. In joint unpublished work with Soumya Bhoumik and Sarbari Mitra of Fort Hays State University, Hays, Kansas, the author has discovered the following additional relationships showing first that the size of the odd graphs is inextricably linked to the Catalan numbers and second that if we desire to find a 3-regular subgraph of $O_n$ by the removal of an independent set of vertices, that this set of vertices must have size related to the fourth convolution of the Catalan numbers. The first result is proved in the following proposition as a straight forward calculation. We present a combinatorial interpretation in the subsequent discussion.

\begin{prop} The size of the odd graphs is given by the formula $$|V(O_n)|=(2n-1)c_{n-1}.$$
\end{prop}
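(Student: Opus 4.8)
The plan is to verify the identity by a direct factorial computation, since by definition $O_n=K_{2n-1,n-1}$ and hence $V(O_n)$ consists of all $(n-1)$-blocks of $[2n-1]$, giving $|V(O_n)|=\binom{2n-1}{n-1}$. It therefore suffices to check that $\binom{2n-1}{n-1}=(2n-1)c_{n-1}$, where $c_{n-1}=\frac{1}{n}\binom{2n-2}{n-1}$, obtained from the formula $c_m=\frac{1}{m+1}\binom{2m}{m}$ with $m=n-1$.

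First I would expand the right-hand side: $(2n-1)c_{n-1}=\frac{2n-1}{n}\binom{2n-2}{n-1}=\frac{(2n-1)(2n-2)!}{n\,(n-1)!\,(n-1)!}=\frac{(2n-1)!}{n!\,(n-1)!}$, using $(2n-1)(2n-2)!=(2n-1)!$ and $n\,(n-1)!=n!$. Then I would expand the left-hand side: $\binom{2n-1}{n-1}=\frac{(2n-1)!}{(n-1)!\,(2n-1-(n-1))!}=\frac{(2n-1)!}{(n-1)!\,n!}$. The two expressions coincide, which establishes the claim. A quick check of the base case $n=1$ (where $|V(O_1)|=\binom{1}{0}=1$ and $(2\cdot 1-1)c_0=1\cdot 1=1$) confirms that the indexing is consistent.

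The hard part here is essentially nonexistent: the result is a bookkeeping exercise once the definitions are unwound, and the only place one can slip is the Catalan-number indexing — one must use $c_{n-1}$, that is $\frac{1}{n}\binom{2n-2}{n-1}$, rather than $c_n=\frac{1}{n+1}\binom{2n}{n}$. If a more conceptual argument were wanted, one could instead exhibit a bijection realizing the $\binom{2n-1}{n-1}$ vertices of $O_n$ as $2n-1$ classes each of size $c_{n-1}$, in the spirit of the cycle lemma applied to a distinguished cyclic statistic on the $(n-1)$-subsets of $[2n-1]$; but the direct calculation above is the cleanest route, and the promised combinatorial interpretation can be supplied afterward in the surrounding discussion.
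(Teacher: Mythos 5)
Your proof is correct and follows essentially the same route as the paper's: identify $|V(O_n)|=\binom{2n-1}{n-1}$ from the definition $O_n=K_{2n-1,n-1}$ and verify $\binom{2n-1}{n-1}=\frac{2n-1}{n}\binom{2n-2}{n-1}=(2n-1)c_{n-1}$ by direct factorial manipulation. The paper does exactly this in one line, deferring the combinatorial interpretation (via orbits under the cyclic permutation) to the surrounding discussion, just as you suggest.
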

\begin{proof} $$|V(O_n)|=\binom{2n-1}{n-1}=\frac{2n-1}{n}\binom{2n-2}{n-1}=(2n-1)c_{n-1}$$
\end{proof}

This result can be interpreted as a count of the number of distinct orbits of $O_n$ under the cyclic permutation $\sigma=(1,2,\dots,2n-1)$, thus to highlight this relationship we might have re-stated the result as $c_{n-1}=\frac{1}{2n+1}|V(O_n)|$. Representatives of these cycles are in one to one correspondence with cyclic equivalence classes of $n$ 1's and $(n-1)$ 0's. We draw the correspondence between these sets by noting that every vertex of $O_n$ is in one to one correspondence with an element of the set $$S_n:=\set{\vec{x}\in\Z_2^{2n-1}|\ ||\vec{x}||^2=n}$$ where $||\vec{x}||$ is the Euclidean magnitude of the vector $\vec{x}$. This correspondence can be defined using the function $$\chi_i(w)=\left\{\begin{array}{ll}0 & i\in w\\1 & i\notin w\end{array}\right..$$ We thus define a function $\chi:V(O_n)\to S$ by $$\chi(v)=(\chi_1(v),\chi_2(v),\dots,\chi_{2n-1}(v))$$ 
Thus since $\sigma$ acts on $O_n$ as a cyclic permutation adding one mod $(2n-1)$ to each element of a vertex, the equivalence classes of $O_n$ under this action are in one-to-one correspondence with the equivalence classes of $S$ under the action $\sigma'=\sigma\chi\sigma^{-1}.$ These cyclic equivalence classes of $0$'s and $1$'s are referenced in \cite{Stan} (relation ($\textrm{o}^6$)) as a collection of items counted by the Catalan numbers.

Godsil and Royle \cite{GodRoyle} describe how the excision of one of the orbits from the graph $O_4$ gives an embedding of the Coxeter graph into $O_4$. As the Coxeter graph is one of the few known counter-examples to the Lov\'asz Conjecture, it is a worthwhile question to ask if other counter-examples might be found as subgraphs of odd graphs. Additionally, it seems reasonable to suggest that such subgraphs be 3-regular (as all known counter-examples are 3-regular). We have discovered that such theoretical subgraphs might be obtained from $O_n$ by deleting a number of vertices equal to $(2n-1)$ times the \textit{fourth convolution of the Catalan numbers}, a quantity given by the formula: $$C_{n}^{(4)}=\frac{4}{2n-4}\binom{2n-4}{n}$$ this formula only begins to give nonzero results when $n=4$, but we can define it to be $0$ when $n=3$. This result can be achieved through a straightforward computation.

\begin{prop}\label{fourthconv} If $O_n$ admits a three-regular subgraph obtained by the deletion of an independent set which is the union of $k$ orbits of vertices under the action of $(1,2,\dots,2n-1)$, then $k=C_{n}^{(4)}.$
\end{prop}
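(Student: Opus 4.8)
The plan is to turn the hypothesis into a single counting identity on the cut between the deleted vertex set and its complement, and then to use the fact (already implicit in the identity $|V(O_n)| = (2n-1)c_{n-1}$) that every orbit of the cyclic permutation $\sigma = (1,2,\dots,2n-1)$ acting on $V(O_n)$ has full length $2n-1$.

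First I would count cut-edges. Write $I$ for the independent set of vertices whose deletion leaves a $3$-regular graph $G = O_n - I$. Because $O_n$ is $n$-regular and $I$ is independent, each vertex of $I$ contributes all $n$ of its edges to the cut $(I, V(O_n)\setminus I)$, while each vertex of $V(O_n)\setminus I$ keeps exactly $3$ edges inside $G$ and hence contributes exactly $n-3$ edges to the cut. Counting the cut-edges from both sides,
\[
n\,|I| \;=\; (n-3)\bigl(|V(O_n)|-|I|\bigr),
\]
which rearranges to $(2n-3)\,|I| = (n-3)\,|V(O_n)| = (n-3)\binom{2n-1}{n-1}$.

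Next I would record the orbit-length claim: if $\sigma^{j}$ fixed some $(n-1)$-subset $v$ of $[2n-1]$, then $v$ would be a union of $\langle\sigma^{j}\rangle$-orbits on $[2n-1]$, all of the common size $d = \mathrm{ord}(\sigma^{j})$, so $d \mid (n-1)$ and $d \mid (2n-1)$, forcing $d \mid \gcd(n-1,2n-1) = 1$; hence every $\langle\sigma\rangle$-orbit on $V(O_n)$ has exactly $2n-1$ elements. Since $I$ is by hypothesis a union of $k$ such orbits, $|I| = k(2n-1)$. Substituting into the identity above gives
\[
k\,(2n-1)(2n-3) \;=\; (n-3)\binom{2n-1}{n-1}, \qquad\text{so}\qquad k \;=\; \frac{n-3}{(2n-1)(2n-3)}\binom{2n-1}{n-1}.
\]

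It then remains only a routine binomial manipulation --- peeling the factors $2n-1$ and $2n-3$ out of $(2n-1)!/((n-1)!\,n!)$ and matching the result against $\dfrac{4}{2n-4}\binom{2n-4}{n}$ --- to conclude $k = C_n^{(4)}$ (both sides being $0$ at $n=3$ under the stated convention). There is no genuine obstacle here: the value of $k$ is completely forced by the hypothesis, and the only ingredient beyond bookkeeping is the orbit-length lemma, which I would either prove in the one line above or simply attribute to the earlier count $|V(O_n)| = (2n-1)c_{n-1}$.
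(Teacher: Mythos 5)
Your proof is correct and is essentially the paper's argument: both reduce the hypothesis to the identity $k(2n-1)(2n-3)=(n-3)\binom{2n-1}{n-1}$ by the same degree/edge double count (you phrase it as counting the cut from both sides, the paper as comparing total edge counts before and after deletion, which is the same bookkeeping), followed by the same binomial simplification. Your explicit verification that every $\langle\sigma\rangle$-orbit has full length $2n-1$ via $\gcd(n-1,2n-1)=1$ is a small welcome addition that the paper leaves implicit.
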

\begin{proof} The graph $O_n$ has $\binom{2n-1}{n-1}$ vertices and $\frac{n}{2}\binom{2n-1}{n-1}$ edges. If we remove an independent set which is the union of $k$ orbits of vertices under the action of $(1,2,\dots,2n-1)$ then we have removed $(2n-1)k$ vertices and $n(2n-1)k$ edges. The result will have $\binom{2n-1}{n-1}-k(2n-1)$ vertices and if it is three-regular, then it will have $$\frac{3}{2}\left(\binom{2n-1}{n-1}-k(2n-1)\right)=\frac{n}{2}\binom{2n-1}{n-1}-nk(2n-1)$$ edges. Rearranging and solving for $k$ gives:
$$k=\frac{(3-n)}{(2n-1)(3-2n)}\binom{2n-1}{n-1}=\frac{(n-3)}{(2n-1)(2n-3)}\binom{2n-1}{n-1}$$
$$=\frac{(n-3)}{(2n-1)(2n-3)}\left(\frac{(2n-1)!}{n!(n-1)!}\right) =\frac{(n-3)(2n-2)(2n-4)!}{n!(n-1)!}$$ $$=\frac{2(2n-4)!}{(n-2)n!(n-4)!}=\frac{4}{2n-4}\frac{(2n-4)!}{n!(n-4)!}=C_n^{(4)}.$$
\end{proof}

Both the described recursion relation, and the Catalan relation invite more inspection. It is promising that we are able to fairly readily construct candidates for Hamiltonian paths in $O_n$ from Hamiltonian paths in $O_{n-1}$, but we are not yet convinced that these candidates are simple, and the method developed by the author fails when $n$ is a power of 2 as in these cases, the remainder graphs have odd size. Furthermore, it is worth examining other combinatorial interpretations of the odd graphs, and how such interpretations might be restricted to the remainder graph $R_{n+1}^2$ especially with regards to the Lov\'asz conjecture.

\bibliographystyle{plain}
\bibliography{mybib}

\begin{thebibliography}{10}

\bibitem{Balaban}
AT~Balaban.
\newblock Chemical graphs, part xiii; combinatorial patterns.
\newblock {\em Rev. Roumain Math. Pures Appl}, 17:3--16, 1972.

\bibitem{Biggs2}
Norman Biggs.
\newblock An edge-colouring problem.
\newblock {\em American Mathematical Monthly}, pages 1018--1020, 1972.

\bibitem{Biggs}
Norman Biggs.
\newblock Some odd graph theory.
\newblock {\em Annals of the New York Academy of Sciences}, 319(1):71--81,
  1979.

\bibitem{GodRoyle}
Chris Godsil and Gordon~F Royle.
\newblock {\em Algebraic graph theory}, volume 207.
\newblock Springer Science \& Business Media, 2013.

\bibitem{Mather}
Michael Mather.
\newblock The rugby footballers of croam.
\newblock {\em Journal of Combinatorial Theory, Series B}, 20(1):62--63, 1976.

\bibitem{MerLloyd2}
Guy~HJ Meredith and E~Keith Lloyd.
\newblock The hamiltonian graphs o4 to o7.
\newblock In {\em Combinatorics (Proc. Conf. Combinatorial Math., Math. Inst.,
  Oxford, 1972)}, pages 229--236, 1972.

\bibitem{MerLloyd1}
Guy~HJ Meredith and E~Keith Lloyd.
\newblock The footballers of croam.
\newblock {\em Journal of Combinatorial Theory, Series B}, 15(2):161--166,
  1973.

\bibitem{Mutze}
Torsten M{\"u}tze.
\newblock Proof of the middle levels conjecture.
\newblock {\em Proceedings of the London Mathematical Society},
  112(4):677--713, 2016.

\bibitem{ShieldsSavage}
Ian Shields and Carla~D Savage.
\newblock A note on hamilton cycles in kneser graphs.
\newblock {\em Bull. Inst. Combin. Appl}, 40:13--22, 2004.

\bibitem{Simpson}
JE~Simpson.
\newblock Hamiltonian bipartite graphs.
\newblock {\em Congressus Numerantium}, pages 97--97, 1991.

\bibitem{OEIS}
Neil~JA Sloane et~al.
\newblock The on-line encyclopedia of integer sequences, 2003.

\bibitem{Stan}
Richard~P Stanley.
\newblock Catalan addendum, 2008.

\end{thebibliography}

\end{document}